\newtheorem{theorem}{Theorem}[section]
\newtheorem{proposition}{Proposition}[section]
\newtheorem{lemma}{Lemma}[section]
\newtheorem{remark}{Remark}[section]
\makeatletter \@addtoreset{equation}{section}
\begin{document}
\title[Berezin transforms on line bundles over $\mathbb{B}^{n}$ ]{On the Berezin transforms on line bundles over the complex Hyperbolic spaces}
\author[N. Askour] {Nour eddine Askour}
\address{ Department of Mathematics, Sultan My Slimane University, Faculty
of Sciences and Technics (M'Ghila), Beni Mellal, Morocco.}
\email{\textcolor[rgb]{0.00,0.00,0.84}{n.askour@usms.ma}}
\subjclass[2010]{47G10;47B35,46N50;47N50.}
\keywords{ Berezin transform,spectral function, Jacobi transform, Continuous dual Hahn polynomials}.

\begin{abstract}
we define a generalized Berezin transforms on line bundle over the complex hyperbolic space $\mathbb{B}^{n}=S(n,1)/S(U(n)\times U(n),$ and we give it as a functions of the G-invariant laplacian on the line bundles.
\end{abstract} \maketitle

\section{Introduction}
 Berezin transform \cite{bere:74} is of particular interest both in quantum theory and operator theory\cite{MREN:99},\cite{Vasi:08}. In a series of papers \cite{bere:75}, \cite{Karab:12} and references theirs in, F. Berezin introduce a new approach to quantization of Khaler manifolds, based on reproducing kernel function theory. Since then many authors have been interested in the so-called Berezin quantization.\\

  Classically the Berezin transform is defined as follows. Consider a domain $\Omega $ $\subset \Bbb{C}^{n}$ and a Borel measure $d\mu $ on $\Omega $. Let $\mathfrak{H}$ be a closed subspace of $L^{2}\left( \Omega ,d\mu \right) $ consisting of continuous function and assume that $\mathfrak{H}$ has a reproducing kernel $K\left( .,.\right) $. The Berezin symbol $\widehat{A}$ of a bounded operator $A$ on $\mathfrak{H}$ is the function defined on $\Omega .$ by

\begin{equation}
\hat{A}\left( z\right) =\frac{\left\langle AK\left( .,z\right) ,K\left( .,z\right) \right\rangle }{K\left( z,z\right) },\qquad z\in \Omega .
\end{equation}
For each $\varphi$ such that $\varphi\mathfrak{H}\in L^{2}\left( \Omega ,d\mu \right)$-for instance for any $\varphi\in L^\infty(\Omega)$, the Toeplitz operator $T_{\varphi}$ with symbol $ \varphi$ is the operator on $\mathfrak{H}$ given by $T_\varphi[f]=P(f\varphi)$; $f\in\mathfrak{H}$, where P is the orthogonal projector on $\mathfrak{H}$.
By definition the Berezin transform B is the integral transform defined by
\begin{equation}
 B[\varphi](z):=\widehat{T_{\varphi}}(z)=\int_{\Omega}\frac{|K(z,\omega)|^{2}}{K(z,z)}\varphi(\omega)d\mu(\omega).
 \end{equation}

The formula representing the Berezin transform as a function of Laplace Beltrami operator plays an important role in the Berezin quantization theory \cite{MREN:96}.

In the case of a bounded symmetric domain $\mathfrak{D}=G/K$, the Berezin transform intertwines with the group actions.Therefore it is a function, in the sense of the functional calculus for commuting self-adjoint operators, of the $G$-invariant differential operators $ \Delta_{1},...,\Delta_{r}$ generating the algebra of all $G$-invariant differential operators on  $\mathfrak{D}$. This idea was carried out by Berezin \cite{bere:75} in the rank one case (without proof) and proved by Unterberger and Upmeier \cite{unup:94} for the general case in the strongest, spectral-theoretic, sense \cite{UPME:97}.\\

Now, taking into a count that the Berezin transform can be defined provided that there is a given closed subspace, which possesses a reproducing kernel, we are here concerned with the domain $\Omega=\Bbb B^{n}$ the unit ball of $\Bbb C^{n}$ viewed  as homogeneous space $ \Bbb B^{n}= G/K$ where $G=SU(n,1)$ be the group of $\Bbb C-$ linear transforms $g$ on $\Bbb C^{n+1}$ that preserve the the indefinite Hermitian form $\sum_{j=1}^{n}|z_{j}|^{2}-|z_{n+1}|^{2},$ with $detg=1.$ Suppose that $\Bbb B^{n}$ is endowed with the measure $d\mu_{\nu}=(1-|z|^{2})^{\nu-n-1}dm(z),$ where $\nu\in \Bbb R_{+} \backslash \Bbb Z,$ $\nu>n$ and $dm(z),$ be the Lebesgue measure on $\mathbb{C}^{n}.$ We consider as the the subspace of the space of the $L^{2}-$ integrable functions on  $\mathbb{B}^{n}$ with respect to the measure $d\mu_{\nu},$ the range space of the spectral projector $R^{\nu}_{l}$ corresponding the eigenvalue $\rho_{l}=-(\nu-n-2l)^{2}$ of the $G$-invariant differential operator:

 \begin{equation}
 \Delta_{\nu}=4(1-|z|^{2})\{\sum_{1\leq i,j\leq n}(\delta_{i,j}-z_{i}\overline{z}_{j})\frac{\partial^{2}}{\partial z_{i}\partial\overline{z}_{j}}-\nu \sum_{j=1}^{n}\overline{z}_{j}\frac{\partial}{\partial\overline{z}_{j}}\},
 \end{equation}
 where $l$ is a fixed integer such that $0\leq l<\frac{\nu-n}{2}.$ In other word, we are concerned with the following eigenspace:

 \begin{equation}
A^{2,\nu}_{l}=\{F\in L^{2}(\Bbb B^{n},d\mu_{\nu}),\Delta_{\nu}F=\rho_{l}F\}.
 \end{equation}

%By \cite{zang:92}, this subspace has the following reproducing kernel:
 \begin{equation}
K_{l}^{\nu}(z,w)=c_{l}\frac{1}{(1-<z,w>)^{\nu}}\quad_{2}F_{1}(-l,l-\nu+n,n;1-\frac{|1-<z,w>|^{2}}{(1-|z|^{2})(1-|w|^{2})}).
\end{equation}
where,
\begin{equation}
c_{l}=\frac{2\Gamma(n+l)}{\pi^{n}\Gamma(n)l!}\frac{(\nu-n-2l)\Gamma(\nu-l)}{\Gamma(\nu-n-l+1)}.
\end{equation}

It is known, that the kernel $K_{0}(z,w)$, corresponding to $l=0,$ is the Bergman kernel, hence the space $A^{2,\nu}_{0}$ is the classical weighted Bergman- space of holomorphic functions that are $(1-|z|^{2})^{\nu-n-1}dm(z)$-integrable, while for $l\neq0$, The space $A^{2,\nu}_{l}$ which can be viewed as as a Kernel spaces of the elliptic differential operator $\Delta_{\nu}-\rho_{l}$, consists of non holomorphic functions. The Berezin transform associated the Hilbert subspace $A^{2,\nu}_{0}$ is studied by many Authors \cite{Peet:90}.\\ Here, for the case $l\neq0,$  we associate to the sub space $A^{2,\nu}_{l},$ the following Berezin transform :

$$L^{2}_{\nu}(\Bbb B^{n})\rightarrow L^{2}_{\nu}(\Bbb B^{n})$$

\begin{equation}
B^{l}_{\nu}F(z)=\int_{\mathbb{B}^{n}} F(w)B^{\nu}_{l}(z,w) d\mu_{\nu}(w),
\end{equation}

where the kernel $B^{\nu}_{l}(z,w),$ is defined by:
\begin{equation}
B^{\nu}_{l}(z,w)=(1-<z,w>)^{-\nu}\frac{\mid K^{\nu}_{l}(z,w)\mid ^{2}}{K^{\nu}_{l}(z,z)K^{\nu}_{l}(w,w)}, (z,w)\in\mathbb{B}^{n}\times \mathbb{B}^{n}.
\end{equation}

In this paper, our aim is to express the above Berezin transform as a function of the $ G-$ invariant Laplacian $\Delta_{\nu}.$ The method used is based on the $L^{2}-$ spectral theory of $\Delta_{\nu}.$ \cite{boin:98}, \cite{zang:92}, together with the Fourier-Jacobi transform \cite{koor:84},\cite{kaji:01}. Precisely, we establish the following result:

$$
B^{\nu}_{l}=\frac{\pi^{n}\Gamma^{2}(n)}{2\Gamma^{2}(\nu-l)}|\Gamma(\frac{i\sqrt{-(\Delta_{\nu}+(n-\nu)^{2})}+3\nu-4l-n}{2})|^{2}
$$

\begin{equation}
\times\sum_{q=0}^{2l}(-1)^{q}A_{q}\frac{S_{q}(\frac{-(\Delta_{\nu}+(n-\nu)^{2})}{4},\frac{3\nu-4l-n}{2},\frac{\nu+n}{2},\frac{n-\nu}{2})}{\Gamma(2(\nu-l)+q)\Gamma(\nu-2l+q)},
\end{equation}

where $S_{q}$ denotes the continuous dual Hahn polynomial and $A_{q}$ are the following parameters
\begin{equation}
A_{q}=2^{-q}\sum_{p=\max (0,q-l)}^{\min (l,q)}(_{q-p}^{l})(_{p}^{l})\frac{\Gamma(\nu-l+q-p)\Gamma(\nu-l+p)}{\Gamma(n+p)\Gamma(n+q-p)}.
\end{equation}
The paper is organized as follows. In section 2, we review some well known spectral properties of the operator $\Delta_{\nu}$. In the section 3 we give the spectral density associated with the G-invariant Laplacian $\Delta_{\nu}$. As an application we give its heat kernel. The section 4, will be devoted for the $G-$ invariance and the boundedness of the Berezin transform $B^{\nu}_{l}.$  In the section 5, we give the proof of the main result (1.9). In section 6, as an application we give an expression of the Berezin heat kernel.

\section{ $L^{2}$-Concrete spectral analysis of the invariant Laplacians $\Delta_{\nu}$.}

In this section we review some results on the $L^{2}$-Concrete spectral analysis,in the sense of Strichartz \cite{stro:89}, of the invariant Laplacians $\Delta_{\nu}$ in the weighted Hilbert space $L^{2}_{\nu}(\Bbb B^{n}).$\\

Let $G=SU(n,1)$ be the group of all $\mathbb{C}$-linear transforms $g,$ on $\mathbb{C}^{n+1}$ that preserve the indefinite hermitian form
\begin{equation}
\sum^{n}_{j=1}  \mid z_{j}\mid^{2}-\mid z_{n+1}\mid^{2},
\end{equation}
with $\det g=1$. \\
The group $G$ acts transitively on the unit ball $\mathbb{B}^{n}=\{z\in \mathbb{C}^{n}; \mid z\mid<1\}$ by
\begin{equation}
G\ni g=\left(
\begin{matrix}
a & b \\
c & d
\end{matrix}
\right)
:z\rightarrow g.z=(az+b)(cz+d)^{-1},
\end{equation}
where $a,b,c,d$ are $n\times n$, $n\times1$, $1\times n$ and $1\times1$ matrices respectively.
Recall that this action satisfy the following relation:
\begin{equation}
1-<gz,gw>=\frac{(1-<z,w>)}{(cz+d)(\overline{cw+d})}
\end{equation}
where $<,>$ is the well known hermitian product on $\mathbb{C}^{n}.$\\
As a homogeneous space we have the identification $\mathbb{B}^{n}=G/K$ where $K$ is the stabilizer of $0$. More precisely
\begin{equation}
K=\left\{ k=\left(
\begin{matrix}
a & 0 \\
0 & d
\end{matrix}
\right)
,a\in U(n), d\in U(1) \quad ; \det(ad)=1\right\}.
\end{equation}
We recall that The $G-$ invariant distance associated to the Bergman metric \cite{chab:90} on the unit ball $\mathbb{B}^{n}=G/K,$ is given by:

\begin{equation}\label{eq:hyperbolicdistance}
\cosh^{2} d(z,w)=\frac{\mid 1-<z,w>\mid^{2}}{(1-\mid z\mid^{2})(1-\mid w\mid^{2})}, (z,w)\in\mathbb{B}^{n}\times\mathbb{B}^{n}.
\end{equation}

Let $\nu \in \mathbb{R}-\mathbb{Z},$ and suppose that $\nu>0$. By $dm(z)$ we denote the Lebesgue measure on $\mathbb{C}^{n}.$
Denote by $d\mu_{\nu},$ the weighted measure on $ \mathbb{B}^{n}$ defined by:

\begin{equation}
d\mu_{\nu}(z)=(1-\mid z\mid^{2})^{\nu-n-1}dm(z),
\end{equation}

and by $L_{\nu}^{2}(\mathbb{B}^{n})$ its the corresponding $L^{2}$-space,

\begin{equation}
L_{\nu}^{2}(\mathbb{B}^{n})=\left\{F:\mathbb{B}^{n}\mapsto \mathbb{C},\int_{\mathbb{B}^{n}}|F(z)|^{2}d\mu_{\nu}(z)<+\infty\right\}
\end{equation}

 For $g\in G,$ we define

\begin{eqnarray}\label{eq:gaction}
T^{\nu}(g)F(z)=J(g^{-1},z)^{\frac{\nu}{n+1}}   F(g^{-1}.z),
\end{eqnarray}

where $J(g^{-1},z)$ is the complex Jacobian of $g^{-1}$ (with a mild ambiguity of its $\nu$ power depending only on $g$).\\
Then $T^{\nu}$ gives rise to a continuous projective  representation of the group $G$  on $L^{2}_{\nu}(\mathbb{B}^{n})$.\\
Notice that the restriction of $J$ to $K$ gives rise to a character $\chi_{\nu}$ of $K$.\\ Namely,

\begin{eqnarray}
J(k,z)^{\frac{\nu}{n+1}}=d^{-\nu},
\end{eqnarray}

for $ k=\left(\begin{matrix}a & 0 \\
0 & d
\end{matrix}
\right)$
.\\
The space $L^{2}_{\nu}(\mathbb{B}^{n})$ is a trivialization of the $L^{2}$-space of sections of the homogeneous line bundle
over $\mathbb{B}^{n}$ associated to the one dimensional representation  $\chi_{\nu}$ of the compact group $K$.\\

The invariant Laplacian with respect to the $G$-action  (\ref{eq:gaction}) is given by:
\begin{eqnarray}
\Delta_{\nu}=4(1-\mid z\mid^{2})\{\sum_{\leq1i,j\leq n}(\delta_{ij}-z_{i}\bar{z}_{j})\frac{\partial^{2}}{\partial z_{i}\partial \bar{z}_{j}}-\nu\sum_{j=1}^{n}\bar{z}_{j}\frac{\partial}{\partial \bar{z}_{j} }\}.
\end{eqnarray}
\textbf{Remark.}
Note that, in~\cite{boin:98}, a more general family of Laplacians $\Delta_{\alpha,\beta}$ has been considered.
The above operators $\Delta_{\nu}$ corresponds to the case \\$\alpha=0$ and  $\beta=-\nu$.\\

In\cite{boin:98} we showed that the invariant laplacian $\Delta_{\nu}$ is a self-adjoint operator in the space $L^{2}_{\nu}(\mathbb{B}^{n}).$\\
Besides the continuous spectrum $\{-(\lambda^{2}+(\nu-n)^{2}),\lambda\in\mathbb{R}\},$ it might have a discrete spectrum according to the size of $\nu$.\\
 Precisely, if $\nu>n$ then the point spectrum of the $G$-invariant laplacian $\Delta_{\nu}$  consists of the finite set
\begin{equation}
\rho_{j}=-(\lambda_{j}^{2}+(\nu-n)^{2}),j=0,.....,[\frac{\nu-n}{2}]
\end{equation}

where
\begin{equation}\label{eq:lambdadeboussejra}
\lambda_{j}=i(2j+n-\nu),
\end{equation}

and, $[x]$=the greatest integer not exceeding $x.$\\
Thus, in the case where $\nu>n,$ the spectrum $\sigma(\Delta_{\nu})$ of the the operator $\Delta_{\nu}$ is given by

\begin{equation}\label{eq:spectrumofnu}
 \sigma(\Delta_{\nu})=\{-(\lambda^{2}+(\nu-n)^{2}),\lambda\in\mathbb{R}\}\cup\{-(\lambda_{j}^{2}+(\nu-n)^{2});j=0,.....,[\frac{\nu-n}{2}]\}
 \end{equation}

According to  \cite{boin:98} and \cite{zang:92} a fundamental family of eigenfunctions of $\Delta_{\nu}$ with eigenvalue $-(\lambda^2+(\nu-n)^2)$
is given by the following family of Poisson kernels:

\begin{equation}\label{eq:poissonkernel}
P^{\nu}_{\lambda}(z,\omega)=(\frac{1-\mid z\mid^{2}}{\mid 1-<z,\omega>\mid^{2}})^\frac{i\lambda+n-\nu}{2}(1-<z,\omega>)^{-\nu},
\end{equation}

from which we may obtain an explicit spectral decomposition of the self-adjoint operator $\Delta_{\nu}$ in the Hilbert space $L^{2}_{\nu}(\mathbb{B}^{n})$.\\
More precisely, let $F\in L^{2}_{\nu}(\mathbb{B}^{n})$. Then we have

\begin{equation}\label{eq:spectraldecomposition}
F=\int^{+\infty}_{-\infty} \mathcal{P}_{\lambda}^{\nu}Fd\lambda +\sum_{0\leq j<\frac{ \nu-n}{2}}\mathcal{R}^{\nu}_{j}F,
\end{equation}

where the integral operators $\mathcal{P}_{\lambda}^{\nu}$ are related to the Fourier-Helgason transform
\begin{equation}
\tilde{F}(\lambda,\omega)=\int_{\mathbb{B}^{n}}F(z){P^{\nu}_{-\lambda}(z,\omega)}d\mu_{\nu}(z),
\end{equation}
by
\begin{equation}\label{eq:projectorlambda}
\mathcal{P}_{\lambda}^{\nu}F(z)=\frac{\Gamma(n)}{42^{2(\nu-n)}\pi^{n+1}}  \mid c^{\nu}(\lambda)\mid^{-2}\int_{\partial
\mathbb{B}^{n}}P^{\nu}_{\lambda}(z,\omega)\tilde{F}(\lambda,\omega)d\sigma(\omega).
\end{equation}

where
\begin{eqnarray}\label{eq:harichanda}
c_{\nu}(\lambda)=\dfrac{2^{-\nu+n-i\lambda}\Gamma(n)\Gamma(i\lambda)}{\Gamma(\frac{i\lambda+n-\nu}{2})\Gamma(\frac{i\lambda+n+\nu}{2})},
\end{eqnarray}

is the analogous of the Harish-Chandra c-function.\\
In above the orthogonal projector operators $\mathcal{R}^{\nu}_{j}$ are given by

\begin{equation}\label{eq:projectorl}
\mathcal{R}^{\nu}_{j}F(z)=c_{j}\int_{\partial \mathbb{B}^{n}}\tilde{F}(\lambda_{j},\omega)P^{\nu}_{\lambda_{j}}(z,\omega)d\omega,
\end{equation}

where
\begin{eqnarray}\label{eq:constantreproduce}
c_{j}=\frac{2\Gamma(n+j)}{\pi^{n}\Gamma(n)j!}\frac{(\nu-n-2j)\Gamma(\nu-j)}{\Gamma(\nu-n-j+1)}.
\end{eqnarray}

Note that the poisson kernel defined in (\ref{eq:poissonkernel}) satisfy the following integral formula:

$$
\int_{\partial\mathbb{B}^{n}}P^{\nu}_{\lambda}(z,\omega)P^{\nu}_{-\lambda}(w,\omega)d\sigma(\omega)=
$$

\begin{equation}\label{eq:integralpoisson}
(1-<z,w>)^{-\nu}F(\frac{i\lambda+n-\nu}{2},\frac{i\lambda+n-\nu}{2},n,-\sinh^{2} d(z,w))
\end{equation}

\begin{remark}
Notice that the set $\{\lambda_{j}=i(2j+n- \nu),0\leq j<\frac{\nu-n}{2}\}$ corresponds to the poles of the Harish-Chandra c-function $c_{\nu}(\lambda)^{-1}$ in the region $Im\lambda<0$.
\end{remark}

From now on we suppose that $\nu>n$.
\section{Spectral density}
Recall that our aim in this paper is to express the Berezin transform $B_{\nu}^{l}$ as function of the $G-$ invariant Laplacian $\Delta_{\nu}.$ For this, we consider the $G-$ invariant shifted Laplacian defined by:
\begin{eqnarray}
 \widetilde{\Delta}_{\nu}=-(\Delta_{\nu}+(\nu-n)^{2}),
 \end{eqnarray}
with $\mathcal{C}_{0}^{\infty}(\mathbb{B}_{n}),$ as its natural regular domain.
Note that the spectrum of the operator $\widetilde{\Delta}_{\nu}$ can be given easily from (\ref{eq:spectrumofnu}) by:

 \begin{equation}\label{eq:spectrumtild}
 \sigma(\widetilde{\Delta_{\nu}})=\{s=\lambda^{2},\lambda\in\mathbb{R}\}\cup\{s_{j}=\lambda_{j}^{2};j=0,.....,[\frac{\nu-n}{2}]\};
 \end{equation}
where $\lambda_{j}$ are defined in (\ref{eq:lambdadeboussejra}). To express The Berezin transform $B_{\nu}^{l}$ in term of $\widetilde{\Delta}_{\nu},$ we will need to compute its spectral density. The extension of $\widetilde{\Delta}_{\nu}$ will be also denoted by $\widetilde{\Delta}_{\nu}.$ The domain of the extension $\widetilde{\Delta}_{\nu}$ will be denote by $\chi.$ This extension admits a spectral decomposition \cite{yosi:68}:
\begin{equation}\label{eq:resoluidentity}
I=\int_{-\infty}^{+\infty}dE_{s},
\end{equation}
where $I$ is the identity operator and
\begin{equation}\label{eq:spectraldecomposition}
\widetilde{\Delta}_{\nu}=\int_{-\infty}^{+\infty}sdE_{s},
\end{equation}
in the weak sense, that is
\begin{equation}
(\widetilde{\Delta}_{\nu}f,g)=\int_{-\infty}^{+\infty}sd(E_{s}f,g)
\end{equation}
for $f\in \chi$ and $g\in \mathrm{L}_{\nu}^{2}(\mathbb{B}_{n}).$ The spectral density \cite{esfu:99}:
\begin{equation}
e_{s}=\frac{dE_{s}}{ds}
\end{equation}
is understood as an operator-valued distribution, an element of the space $\mathcal{D'}(\mathbb{R}, L(\chi,\mathrm{L}_{\nu}^{2}(\mathbb{B}_{n}))$
where $L(\chi,\mathrm{L}_{\nu}^{2}(\mathbb{B}_{n})),$ is the space of bounded operators from $\chi$ to $\mathrm{L}_{\nu}^{2}(\mathbb{B}_{n}).$
In term of the spectral density $e_{s}=\frac{dE_{s}}{ds},$ the equation (\ref{eq:resoluidentity}) and (\ref{eq:spectraldecomposition}) become
\begin{equation}
I=<e_{s},1>,
\end{equation}
and
\begin{equation}
\widetilde{\Delta}_{\nu}=<e_{s},s>,
\end{equation}
where $<f(s),\phi(s)>$ is the evaluation of the distribution $<f(s),\phi(s)>,$ is the evaluation of the distribution $f(s)$ on a test function $\phi(s).$ Since $\widetilde{\Delta}_{\nu}$ is elliptic, then its spectral density $e_{s}$ admits a distributional kernel (called the spectral function) $e(s,z,w),$ an element of $\mathcal{D}'(\mathbb{R},\mathcal{D}'(\mathbb{B}_{n}\times \mathbb{B}_{n})).$ Precisely, we have the following proposition.

 %From the spectral decomposition (\ref{eq:spectraldecomposition}) of $\Delta_{\nu}$ we can find the spectral function \cite{esfu:99} of the $G-$ invariant shifted Laplacian :

 %Note that the spectrum of the operator $\widetilde{\Delta}_{\nu}$ can be given easily from (\ref{eq:spectrumofnu}) by:

 %\begin{equation}\label{eq:spectrumtild}
% \sigma(\widetilde{\Delta_{\nu}})=\{s=\lambda^{2},\lambda\in\mathbb{R}\}\cup\{s_{j}=\lambda_{j}^{2};j=0,.....,[\frac{\nu-n}{2}]\};
 %\end{equation}

%where $\lambda_{j}$ are defined in (\ref{eq:lambdadeboussejra})

\begin{proposition}\label{eq:spectralfunction}
 The spectral function $e(s,w,z)$ of the operator $\widetilde{\Delta}_{\nu},$ is given by:

$$e(s,w,z)=\frac{\Gamma(n)}{4\pi^{n+1}2^{2(\nu-n)}}(1-<z,w>)^{-\nu}\chi_{+}(s)|C_{\nu}(\sqrt{s})|^{-2}(\sqrt{s})^{-1}\phi_{\sqrt{s}}^{(n-1,-\nu)}(d(z,w))$$

$$
+\sum_{j=0}^{\frac{\nu-n}{2}}c_{j}(1-<z,w>)^{-\nu}\phi_{\sqrt{s_{j}}}^{(n-1,-\nu)}(d(z,w)\delta(s-s_{j}),
$$

where $\chi_{+}(s)$ is the characteristic function of the set of real positif numbers,$s$ and $s_{j}$ are the spectral parameters defined in (\ref{eq:spectrumtild}) and $\phi^{(\alpha,\beta)}_{\lambda}(t),$ is the Jacobi function defined by:

$$
\phi^{(\alpha,\beta)}_{\lambda}(t)=\quad_{2}F_{1}(\frac{\alpha+\beta+1-i\lambda}{2},\frac{\alpha+\beta+1+i\lambda}{2},1+\alpha;-\sinh^{2}t).\\
$$

\end{proposition}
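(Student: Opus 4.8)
The plan is to read the spectral function off directly from the concrete spectral resolution already recorded in Section~2, after rewriting that resolution in the spectral variable $s=\lambda^{2}$ of $\widetilde\Delta_\nu$. The decomposition $F=\int_{-\infty}^{+\infty}\mathcal P^\nu_\lambda F\,d\lambda+\sum_{0\le j<\frac{\nu-n}{2}}\mathcal R^\nu_j F$ is a resolution of the identity for $\Delta_\nu$, hence for $\widetilde\Delta_\nu=-(\Delta_\nu+(\nu-n)^2)$; since $\widetilde\Delta_\nu$ acts as $\lambda^2$ on each Poisson eigenfunction (\ref{eq:poissonkernel}), matching this with the abstract resolution $I=\int dE_s$ of (\ref{eq:resoluidentity}) through $s=\lambda^2$ will identify $e_s=dE_s/ds$, whose Schwartz kernel is the claimed $e(s,w,z)$. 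I would therefore first compute the kernels of $\mathcal P^\nu_\lambda$ and $\mathcal R^\nu_j$, and only then perform the change of variables.

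\textbf{Continuous part.} From (\ref{eq:projectorlambda}) together with (\ref{eq:integralpoisson}) the Schwartz kernel of $\mathcal P^\nu_\lambda$ is
$$\mathcal P^\nu_\lambda(z,w)=\frac{\Gamma(n)}{4\cdot 2^{2(\nu-n)}\pi^{n+1}}\,|c_\nu(\lambda)|^{-2}\,(1-\langle z,w\rangle)^{-\nu}\,\phi^{(n-1,-\nu)}_\lambda(d(z,w)),$$
where I have used that, with $(\alpha,\beta)=(n-1,-\nu)$, the Gauss hypergeometric factor in (\ref{eq:integralpoisson}) is exactly the Jacobi function $\phi^{(n-1,-\nu)}_\lambda(d(z,w))$. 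Both the Plancherel factor $|c_\nu(\lambda)|^{-2}$ (from (\ref{eq:harichanda}), using $\overline{c_\nu(\lambda)}=c_\nu(-\lambda)$ for real $\lambda$) and the Jacobi function are even in $\lambda$, so $\mathcal P^\nu_{-\lambda}=\mathcal P^\nu_\lambda$. Hence $\int_{-\infty}^{+\infty}\mathcal P^\nu_\lambda\,d\lambda=2\int_0^{\infty}\mathcal P^\nu_\lambda\,d\lambda$, and the substitution $s=\lambda^2$, $d\lambda=\tfrac{1}{2\sqrt s}\,ds$, turns this into $\int_0^\infty \tfrac{1}{\sqrt s}\mathcal P^\nu_{\sqrt s}\,ds$. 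Comparing with $\int_0^\infty e_s\,ds$ gives the continuous contribution $e^{\mathrm{cont}}_s=\chi_{+}(s)\,(\sqrt s)^{-1}\mathcal P^\nu_{\sqrt s}$, whose kernel is precisely the first line of the statement.

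\textbf{Discrete part.} Each $\mathcal R^\nu_j$ is the finite-rank orthogonal projector onto the eigenspace at $s_j=\lambda_j^2$ (see (\ref{eq:spectrumtild}), (\ref{eq:lambdadeboussejra})); in $I=\int dE_s$ it contributes a point mass at $s_j$, so in $e_s=dE_s/ds$ it appears as $\mathcal R^\nu_j\,\delta(s-s_j)$. Reading its kernel off (\ref{eq:projectorl}) with (\ref{eq:integralpoisson}) yields $c_j(1-\langle z,w\rangle)^{-\nu}\phi^{(n-1,-\nu)}_{\lambda_j}(d(z,w))$, with $c_j$ as in (\ref{eq:constantreproduce}); since $\phi$ is even and $\sqrt{s_j}=\pm\lambda_j$, one may write $\phi^{(n-1,-\nu)}_{\lambda_j}=\phi^{(n-1,-\nu)}_{\sqrt{s_j}}$, producing the second line. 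Summing the two contributions gives the asserted formula.

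The kernel identifications above are immediate; the point I expect to be the main obstacle is the distributional bookkeeping. One must justify that $e_s=dE_s/ds$ may be computed term by term and that its operator-valued kernel is recovered term by term when paired against test functions in $s$ — in particular, that differentiating the continuous part of $\int dE_s$ legitimately produces the factor $(\sqrt s)^{-1}$, and that the Jacobian $d\lambda=\tfrac{1}{2\sqrt s}\,ds$ together with the factor $2$ from the two branches $\lambda=\pm\sqrt s$ combine to give exactly the constant $\tfrac{\Gamma(n)}{4\cdot 2^{2(\nu-n)}\pi^{n+1}}$ in front, with no spurious factor of $2$. This requires treating $\mathcal P^\nu_{\sqrt s}$ as an element of $\mathcal D'(\mathbb R,\mathcal D'(\mathbb B^n\times\mathbb B^n))$ and checking the pairing against $\phi(s)\in C_c^\infty(\mathbb R)$ directly, which is where the ellipticity of $\widetilde\Delta_\nu$, guaranteeing that the kernel exists as a distribution, is used.
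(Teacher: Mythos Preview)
Your approach is essentially the same as the paper's: both read off the kernels of $\mathcal P^\nu_\lambda$ and $\mathcal R^\nu_j$ from (\ref{eq:projectorlambda}), (\ref{eq:projectorl}) via the integral formula (\ref{eq:integralpoisson}), fold the $\lambda$-integral using evenness, and change variables $s=\lambda^2$ to produce the $(\sqrt s)^{-1}$ factor and the $\delta(s-s_j)$ point masses. The only difference is in how the distributional bookkeeping you flag is resolved: the paper does this by additionally applying $\widetilde\Delta_\nu$ to the decomposition, obtaining $\langle T,s\rangle=\widetilde\Delta_\nu$ alongside $\langle T,1\rangle=I$, and then invoking uniqueness of the spectral density of a self-adjoint operator.
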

$\mathbf{Proof}$

Let $F$ be a  $C^{\infty}-$ function with compact support in $\mathbb{B}^{n},$ then from the relation (\ref{eq:spectraldecomposition}), we have:

\begin{equation}\label{eq:comp1}
F(z)=\int^{+\infty}_{-\infty} \mathcal{P}_{\lambda}^{\nu}[F](z)d\lambda +\sum_{0\leq j<\frac{ \nu-n}{2}}\mathcal{R}^{\nu}_{j}[F](z).
\end{equation}

Now, by inserting  (\ref{eq:projectorlambda}) and (\ref{eq:projectorl}) in the equation (\ref{eq:comp1}), we obtain:

$$
F(z)=\frac{\Gamma(n)}{4\pi^{n+1}2^{2(\nu-n)}}\int_{-\infty}^{+\infty}d\lambda |C_{\nu}(\lambda)|^{-2}\int_{\mathbb{B}^{n}}\left(\int_{\partial\mathbb{B}^{n}}P^{\nu}_{\lambda}(z,\omega)P^{\nu}_{-\lambda}(w,\omega)d\sigma(\omega)\right)F(w)d\mu_{\nu}(w)
$$

\begin{equation}\label{eq:comp2}
+\sum_{0}^{\frac{\nu-n}{2}}c_{j}\int_{\mathbb{B}^{n}}\left(\int_{\partial\mathbb{B}^{n}}P^{\nu}_{\lambda_{j}}(z,\omega)P^{\nu}_{-\lambda_{j}}(w,\omega)d\sigma(\omega)\right)F(w)d\mu_{\nu}(w).
\end{equation}

where $d\sigma(\omega),$  is the superficial measure on $\partial\mathbb{B}^{n}.$\\
Making use of the formula(\ref{eq:integralpoisson})in where the hypergeometric function in the right hand side, was replaced by the corresponding Jacobi function to get:

$$
F(z)=\frac{\Gamma(n)}{4\pi^{n+1}2^{2(\nu-n)}}\int_{-\infty}^{+\infty}d\lambda |C_{\nu}(\lambda)|^{-2}\int_{\mathbb{B}^{n}}(1-<z,w>)^{-\nu}(\phi^{(n-1,-\nu)}_{\lambda}(d(z,w)))F(w)d\mu_{\nu}(w)
$$

\begin{equation}\label{eq:comp3}
+\sum_{0}^{\frac{\nu-n}{2}}c_{j}\int_{\mathbb{B}^{n}}(1-<z,w>)^{-\nu}(\phi^{(n-1,-\nu)}_{\lambda_{j}}(d(z,w)))F(w)d\mu_{\nu}(w).
\end{equation}

It is not difficult to see that the function involved in the first integral with respect to the variable $\lambda$ in (\ref{eq:comp3}) is even. Then the equation (\ref{eq:comp3}) can be written as:
$$
F(z)=\frac{\Gamma(n)}{42^{2(\nu-n)}\pi^{n+1}}2\int_{0}^{+\infty}d\lambda |C_{\nu}(\lambda)|^{-2}\int_{\mathbb{B}^{n}}(1-<z,w>)^{-\nu}(\phi^{(n-1,-\nu)}_{\lambda}(d(z,w)))F(w)d\mu_{\nu}(w)
$$

\begin{equation}\label{eq:comp4}
+\sum_{0}^{\frac{\nu-n}{2}}c_{j}\int_{\mathbb{B}^{n}}(1-<z,w>)^{-\nu}(\phi^{(n-1,-\nu)}_{\lambda_{j}}(d(z,w)))F(w)d\mu_{\nu}(w).
\end{equation}

Making use the change of variable $s=\lambda^{2}$ in the first integral of (\ref{eq:comp4}), and $s_{j}=\lambda_{j}^{2}$ in the discreet part then, (\ref{eq:comp4}) can be rewritten as:\\

$$
F(z)=\frac{\Gamma(n)}{4\pi^{n+1}2^{2(\nu-n)}}\int_{0}^{+\infty}ds |C_{\nu}(\sqrt{s})|^{-2}s^{\frac{-1}{2}}\int_{\mathbb{B}^{n}}(1-<z,w>)^{-\nu}(\phi^{(n-1,-\nu)}_{\sqrt{s}}(d(z,w)))F(w)d\mu_{\nu}(w)
$$

\begin{equation}\label{eq:comp5}
+\sum_{0}^{\frac{\nu-n}{2}}c_{j}\int_{\mathbb{B}^{n}}(1-<z,w>)^{-\nu}(\phi^{(n-1,-\nu)}_{\sqrt{s_{j}}}(d(z,w)))F(w)d\mu_{\nu}(w).
\end{equation}

This last identity can be written in the distributional sense as:
\begin{equation}\label{eq:comp6}
F(z)=\int_{-\infty}^{\infty}(\int_{\mathbb{B}^{n}}e(s,w,z)F(w)d\mu_{\nu}(w))ds,
\end{equation}
where the Schwartz kernel $e(s,w,z)$ is given by:
$$e(s,w,z)=\frac{\Gamma(n)}{4\pi^{n+1}2^{2(\nu-n)}}(1-<z,w>)^{-\nu}\chi_{+}(s)|C_{\nu}(\sqrt{s})|^{-2}(\sqrt{s})^{-1}\phi_{\sqrt{s}}^{(n-1,-\nu)}(d(z,w))$$

$$
+\sum_{j=0}^{\frac{\nu-n}{2}}c_{j}(1-<z,w>)^{-\nu}\phi_{\sqrt{s_{j}}}^{(n-1,-\nu)}(d(z,w)\delta(s-s_{j}).
$$
Now, by returning back to the equation (\ref{eq:comp1}) and applying the operator $\widetilde{\Delta_{\nu}}$ to its both sides, we obtain the following equation:
\begin{equation}\label{eq:comp11}
\widetilde{\Delta_{\nu}}[F](z)=\int^{+\infty}_{-\infty} \lambda^{2}\mathcal{P}_{\lambda}^{\nu}[F](z)d\lambda +
\sum_{0\leq j<\frac{ \nu-n}{2}}\lambda_{j}^{2}\mathcal{R}^{\nu}_{j}[F](z).
\end{equation}
As in the above, we make use of the change of variable $s=\lambda^{2},$ in the integral part of (\ref{eq:comp11}) and put $s_{j}=\lambda_{j}^{2}$ in the discreet part. Then by following the same steps, we obtain:
\begin{equation}\label{eq:comp21}
\widetilde{\Delta_{\nu}}[F](z)=\int_{-\infty}^{+\infty}s(\int_{\mathbb{B}}e(s,w,z)F(w)d\mu_{\nu}(w))ds.
\end{equation}
By considering the functional $T$ which corresponds to a test function $\varphi$ the operator $<T,\varphi> \in \mathbf{L}(\chi,\mathrm{L}_{\nu}^{2}(\mathbb{B}_{n}))$ defined by:
\begin{equation}
<T,\varphi>[F](z)=\int_{-\infty}^{+\infty}\varphi(s)(\int_{\mathbb{B}}e(s,w,z)F(w)d\mu_{\nu}(w))ds,
\end{equation}
we observe that the two equations (\ref{eq:comp6}) and (\ref{eq:comp21}) become:
\begin{equation}
<T,1>=I,
\end{equation}
\begin{equation}
<T,s>=\widetilde{\Delta_{\nu}},
\end{equation}
and by the uniqueness of the spectral density associated with a self-adjoint operator, we conclude that the functional $T$ is nothing but the spectral density of the operator $\widetilde{\Delta_{\nu}}.$ This end the proof.

\begin{remark}
For given a suitable function $f:\mathbb{R}\rightarrow\mathbb{C},$ the operator $f(\widetilde{\Delta_{\nu}}),$ is defined by

\begin{equation}\label{eq:functionofoperator}
f(\widetilde{\Delta_{\nu}})[\varphi](z)=\int_{\mathbb{B}^{n}}\Omega_{f}(w,z)\varphi(w)d\mu_{\nu}(w)
\end{equation}
where the kernel $\Omega_{f}(w,z)$ is defined by:

\begin{equation}\label{eq:kerneloffunctionofoperator}
\Omega_{f}(w,z)=\int_{\sigma(\widetilde{\Delta}_{\nu})}e(s,w,z)f(s)ds
\end{equation}

with (\ref{eq:functionofoperator}) and (\ref{eq:kerneloffunctionofoperator}) are understand in the distributional sense.
\end{remark}

As a direct consequence of the above proposition we can derive the heat kernel of the $G-$ invariant operator $\Delta_{\nu}$.Precisely, we have the following.
\begin{proposition}
Let $\psi(t,z),$ be the solution of the heat Cauchy problem associated to the operator $\Delta_{\nu},$ on $\mathbb{B}_{n}:$
\begin{equation}
\partial_{t}\psi(t,z)=\Delta_{\nu}\psi(t,z), (t,z)\in\mathbb{R}_{+}\times \mathbb{B}_{n}
\end{equation}
\begin{equation}
\psi(0,z)=\varphi(z)\in\mathcal{C}_{0}^{\infty}(\mathbb{B}_{n}).
\end{equation}
Then, $\psi(t,z),$ is given by the integral formula:
\begin{equation}
\psi(t,z)=\int_{\mathbb{B}_{n}}K_{\nu}(t,z,w)\varphi(w)d\mu_{\nu}(w),
\end{equation}

where$ K_{\nu}(t,z,w),$ is the heat kernel given by:
\begin{equation}
K_{\nu}(t,z,w)=(1-<z,w>)^{-\nu}\sum_{j=0}^{\frac{\nu-n}{2}}\tau_{j}e^{-2j(\nu-n-2j)t}P_{j}^{(n-1,-\nu)}(\cosh2d(z,w))
\end{equation}

\begin{equation}
+(1-<z,w>)^{-\nu}e^{-t(\nu-n)^{2}}\frac{\Gamma(n)}{2\pi^{n+1}2^{2(\nu-n)}}
\end{equation}
$$
\times \int_{0}^{+\infty}e^{-t\lambda^{2}}\mid C_{\nu}(\lambda)\mid^{-2}F(\frac{n-\nu-i\lambda}{2},\frac{n-\nu+i\lambda}{2},n;-\sinh^{2}(d(z,w)))d\lambda.
$$
where $\tau_{j}=\frac{2(\nu-n-2j)\Gamma(\nu-j)}{\pi^{n}\Gamma(\nu-n-j+1)}$ and $C_{\nu}(\lambda)$ is the Harish-chandra function defined in (\ref{eq:harichanda}).

\end{proposition}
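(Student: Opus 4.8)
The plan is to realize the solution operator as the heat semigroup $e^{t\Delta_{\nu}}$ and then read off its kernel from the spectral function computed in the previous proposition. Since $\Delta_{\nu}$ is self-adjoint on $L^{2}_{\nu}(\mathbb{B}^{n})$ and bounded above (its spectrum lies in $(-\infty,0]$, the top being attained at $j=0$), the Cauchy problem has the unique solution $\psi(t,z)=e^{t\Delta_{\nu}}\varphi(z)$, so it suffices to identify the integral kernel of $e^{t\Delta_{\nu}}$. First I would rewrite the generator in terms of the shifted Laplacian: from $\widetilde{\Delta}_{\nu}=-(\Delta_{\nu}+(\nu-n)^{2})$ one gets $\Delta_{\nu}=-\widetilde{\Delta}_{\nu}-(\nu-n)^{2}$, hence $e^{t\Delta_{\nu}}=e^{-t(\nu-n)^{2}}\,e^{-t\widetilde{\Delta}_{\nu}}$.

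Next I would apply the functional calculus of the Remark with the choice $f(s)=e^{-ts}$, which for $t>0$ is admissible because of the Gaussian decay in the spectral variable. This realizes $e^{-t\widetilde{\Delta}_{\nu}}$ as the integral operator with kernel $\Omega_{f}(w,z)=\int_{\sigma(\widetilde{\Delta}_{\nu})}e(s,w,z)e^{-ts}\,ds$, so that the heat kernel is $K_{\nu}(t,z,w)=e^{-t(\nu-n)^{2}}\,\Omega_{f}(w,z)$. Substituting the expression for $e(s,w,z)$ from Proposition~\ref{eq:spectralfunction} splits the computation into the absolutely continuous part (the $\chi_{+}(s)$ term) and the discrete part (the sum of Dirac masses $\delta(s-s_{j})$).

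For the continuous part I would undo the change of variables $s=\lambda^{2}$, using $(\sqrt{s})^{-1}\,ds=2\,d\lambda$ on $(0,\infty)$ and rewriting the Jacobi function $\phi^{(n-1,-\nu)}_{\lambda}$ as the hypergeometric function $F(\tfrac{n-\nu-i\lambda}{2},\tfrac{n-\nu+i\lambda}{2},n;-\sinh^{2}d(z,w))$; this reproduces the stated integral together with its constant $\tfrac{\Gamma(n)}{2\pi^{n+1}2^{2(\nu-n)}}$ and the damping factor $e^{-t(\nu-n)^{2}}e^{-t\lambda^{2}}$. For the discrete part I would evaluate each Dirac mass at $s_{j}=\lambda_{j}^{2}$ and combine $e^{-t(\nu-n)^{2}}$ with $e^{-ts_{j}}$ to produce the exponentials attached to the point spectrum. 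The key special-function input here is that at $\lambda=\lambda_{j}=i(2j+n-\nu)$ the two numerator parameters of $\phi^{(n-1,-\nu)}_{\lambda_{j}}$ collapse to $-j$ and $n-\nu+j$, so the series terminates and $\phi^{(n-1,-\nu)}_{\lambda_{j}}(d(z,w))=F(-j,n-\nu+j,n;-\sinh^{2}d(z,w))$ is, up to the factor $j!/(n)_{j}$, the Jacobi polynomial $P^{(n-1,-\nu)}_{j}(\cosh 2d(z,w))$.

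The main obstacle I anticipate is the discrete part: one must carry out this reduction of the Jacobi function to the Jacobi polynomial $P^{(n-1,-\nu)}_{j}$ and then match constants, checking that $c_{j}\,j!/(n)_{j}$ collapses to $\tau_{j}=\tfrac{2(\nu-n-2j)\Gamma(\nu-j)}{\pi^{n}\Gamma(\nu-n-j+1)}$ after the $\Gamma(n+j)$ and $j!$ factors in $c_{j}$ cancel. The continuous part, by contrast, is a routine change of variables. I would also record that interchanging the spectral integral and sum with the integration in $w$ is legitimate because $\varphi\in\mathcal{C}_{0}^{\infty}(\mathbb{B}^{n})$ and, for $t>0$, the factor $e^{-ts}$ makes the continuous integrand integrable against $|C_{\nu}(\sqrt{s})|^{-2}$, so all manipulations are valid in the distributional pairing already set up in the proof of the preceding proposition.
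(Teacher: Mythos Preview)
Your proposal is correct and follows essentially the same route as the paper's proof: write $e^{t\Delta_{\nu}}=e^{-t(\nu-n)^{2}}e^{-t\widetilde{\Delta}_{\nu}}$, apply the functional calculus of the Remark with $f(s)=e^{-ts}$ to the spectral function of Proposition~\ref{eq:spectralfunction}, undo $s=\lambda^{2}$ in the continuous part, and in the discrete part reduce $\phi^{(n-1,-\nu)}_{\lambda_{j}}$ to $P_{j}^{(n-1,-\nu)}(\cosh 2d(z,w))$ via the terminating-series identity, checking that $c_{j}\cdot j!/(n)_{j}=\tau_{j}$. The paper proceeds in exactly this order and with the same identifications.
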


\begin{proof}
The solution $\psi(t,z),$ is given by the action of the semigroup $e^{t\Delta_{\nu}}$ on the initial data $\varphi(z),$:

\begin{equation}
\psi(t,z)=e^{t\Delta_{\nu}}[\varphi](z).
\end{equation}
Note that for the operators $ \Delta_{\nu}$ and $\widetilde{\Delta_{\nu}}=-(\Delta_{\nu}+(n-\nu)^{2}),$ we have the following semigroup relation:

\begin{equation}
e^{t\Delta_{\nu}}=e^{-t(\nu-n)^{2}}e^{-t\widetilde{\Delta_{\nu}}}.
\end{equation}
Then, the heat kernel $k(t,z,w)$ of $\Delta_{\nu}$ is given by:
\begin{equation}
K(t,z,w)=e^{-t(\nu-n)^{2}}\widetilde{K}_{\nu}(t,z,w),
\end{equation}
where $\widetilde{K}_{\nu}(t,z,w)$ is the heat kernel of the operator $\widetilde{\Delta_{\nu}}.$

By using (\ref{eq:functionofoperator}) and (\ref{eq:kerneloffunctionofoperator}),we obtain:

$$
\widetilde{K}_{\nu}(t,z,w)=\int_{\sigma(\widetilde{\Delta_{\nu}})}e(s,w,z)e^{-ts}ds.
$$
Then, we have
$$
\widetilde{K}_{\nu}(t,z,w)=\sum_{j=0}^{\frac{\nu-n}{2}}c_{j}(1-<z,w>)^{-\nu}\phi_{\sqrt{s_{j}}}^{(n-1,-\nu)}(d(z,w))e^{-ts_{j}}
$$

\begin{equation}\label{eq:heatofdeltatilda}
+\frac{\Gamma(n)}{4\pi^{n+1}2^{(\nu-n)}}(1-<z,w>)^{-\nu}\int_{0}^{+\infty}|C_{\nu}(\sqrt{s})|^{-2}(\sqrt{s})^{-1}\phi_{\sqrt{s}}^{(n-1\nu)}(d(z,w))e^{-ts}ds.
\end{equation}
 Recall that $\sqrt{s_{i}}=\lambda_{j}=i(2j+n-\nu),$ $ j=0,1,...,\frac{n-\nu}{2}.$ Then the Jacobi function $\phi_{\sqrt{s_{j}}}^{(n-1,-\nu)}(d(z,w)),$ involved in the discreet part of the kernel $ \widetilde{K}_{\nu}(t,z,w),$ becomes

\begin{equation}\label{eq:jacobidensitiy1}
\phi_{\sqrt{s_{j}}}^{(n-1,-\nu)}(d(z,w))=_{2}F_{1}(-j,j+n-\nu,n;-\sinh^{2}(d(z,w)).
\end{equation}
Next,by using of the identity (\cite{magn:66}, p.39)
\begin{equation}
P_{k}^{(\alpha,\beta)}(y)=\frac{(1+\alpha)_{k}}{k!}_{2}F_{1}(-k,\alpha+\beta+k+1,\alpha+1;\frac{1-y}{2}),
\end{equation}
for $\alpha=n-1, \beta=-\nu, k=j$ and $y=1+2\sinh^{2}(d(z,w))=\cosh2d(z,w),$ the equation (\ref{eq:jacobidensitiy1}) becomes:

\begin{equation}
\phi_{\sqrt{s_{j}}}^{(n-1,-\nu)}(d(z,w))=\frac{j!}{(n)_{j}}P_{j}^{(\alpha,\beta)}(\cosh2d(z,w)).
\end{equation}

By inserting the above expression of Jacobi function in the discreet part of the equation (\ref{eq:heatofdeltatilda}) and using the change of variable $s=\lambda^{2}, \lambda>0,$ in the continuous part, we obtain:

\begin{equation}
\widetilde{K}_{\nu}(t,z,w)=(1-<z,w>)^{-\nu}\sum_{j=0}^{\frac{\nu-n}{2}}\tau_{j}e^{-(2j+n-\nu)^{2}t}P_{j}^{(n-1,-\nu)}(\cosh2d(z,w))
\end{equation}

\begin{equation}
+(1-<z,w>)^{-\nu}\frac{\Gamma(n)}{2\pi^{n+1}2^{2(\nu-n)}}
\end{equation}

$$
\times \int_{0}^{+\infty}e^{-t\lambda^{2}}\mid C_{\nu}(\lambda)\mid^{-2}F(\frac{n-\nu-i\lambda}{2},\frac{n-\nu+i\lambda}{2},n;-\sinh^{2}(d(z,w)))d\lambda,
$$

where $\tau_{j}=\frac{2(\nu-n-2j)\Gamma(\nu-j)}{\pi^{n}\Gamma(\nu-n-j+1)}.$ Then, we get the desired result.

\end{proof}

\section{The Berezin Transform}
  Let $l$ be a fixed integer in the set  $j=0,.....,[\frac{\nu-n}{2}],$ and let $A_{\nu}^{2,\nu}(\mathbb{B}^{n})=R^{\nu}_{l}L^{2}_{\nu}(\Bbb B^{n})$ be the subspace appearing in the discrete part of the Plancherel formula (\ref{eq:spectraldecomposition}). Then, according to \cite{zang:92}, $A_{\nu}^{2,\nu}(\mathbb{B}^{n})$ is a closed invariant subspace of $L^{2}_{\nu}(\Bbb B^{n}),$ with reproducing kernel given by:
\begin{equation}
K_{l}^{\nu}(z,w)=c_{l}\frac{1}{(1-<z,w>)^{\nu}}\quad_{2}F_{1}(-l,l-\nu+n,n;1-\frac{|1-<z,w>|^{2}}{(1-|z|^{2})(1-|w|^{2})}).
\end{equation}
where $c_{l}$ is the constant defined by (\ref{eq:constantreproduce}).
\begin{remark}
Thanks to the formula (\ref{eq:hyperbolicdistance}) The reproducing kernel $K_{l}^{\nu}(z,w),$ can be written also as:
\begin{equation}\label{eq:reproducedistance}
K^{\nu}_{l}(z,w)=c_{l}(1-<z,w>)^{-\nu}F(-l,l-\nu+n,n,-\sinh^{2}d(z,w)),
\end{equation}
\end{remark}

Notice that if  $l=0$ the above kernel is the Bergman kernel. Thus the space $A_{0}^{2,\nu}(\mathbb{B}^{n})$
is the classical weighted Bergman space of holomorphic functions in $L_{\nu}^{2}(\mathbb{B}^{n})$.\\
As is well known the classical Berezin transform associated to $A_{0}^{2,\nu}(\mathbb{B}^{n})$ is defined by:
$$
B_{\nu}F(z)=\frac{2\Gamma(\nu)}{\pi^{n}\Gamma(\nu-n)}\int_{\mathbb{B}^{n}}\frac{(1-|z|^{2})^{\nu}}{|1-<z,w>|^{2\nu}}F(w)d\mu_{\nu}(w),
$$
and has been studied by many authors.\\
As mentioned in the above, our aim in this section is to define a G- invariant Berezin transform associated to the G-invariant eigenspace $A_{l}^{2,\nu}(\mathbb{B}^{n}),$ to this end we consider the following kernel function:

\begin{equation}\label{eq:berkernel1}
B^{\nu}_{l}(z,w)=(1-<z,w>)^{-\nu}\frac{\mid K^{\nu}_{l}(z,w)\mid ^{2}}{K^{\nu}_{l}(z,z)K^{\nu}_{l}(w,w)}, (z,w)\in\mathbb{B}^{n}\times \mathbb{B}^{n}.
\end{equation}

\textbf{Definition}
Assume that $l\neq0.$ The transformation $B^{l}_{\nu}$ defined by:
$$L^{2}_{\nu}(\Bbb B^{n})\rightarrow L^{2}_{\nu}(\Bbb B^{n})$$

\begin{eqnarray}\label{eq:definition}
B^{l}_{\nu}F(z)=\int_{\mathbb{B}^{n}} F(w)B^{\nu}_{l}(z,w) d\mu_{\nu}(w),
\end{eqnarray}
is called here the Berezin transform on the the line bundle over the complex hyperbolic space  $\Bbb B^{n}=SU(n,1)/SU(U(n)\times U(1)).$ \\

Explicitly, from the expression of the reproducing kernel $K^{\nu}_{l}(z,w)$ given in (\ref{eq:reproducedistance}) combining with the use of the equation (\ref{eq:hyperbolicdistance}), the expression of the berezin kernel $B^{\nu}_{l}(z,w)$ is given by:
\begin{equation}\label{eq:berezinkernel}
B^{\nu}_{l}(z,w)=(1-<z,w>)^{-\nu}\cosh^{-2\nu} d(z,w)|F(-l,l-\nu+n,n,-\sinh^{2}d(z,w))|^{2}.
\end{equation}
It is easy to establish the following,

\begin{proposition}
The Berezin transform $B^{l}_{\nu}$ is $G$-invariant with respect to the representation $T^{\nu}$.
\end{proposition}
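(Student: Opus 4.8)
The plan is to prove the intertwining relation $T^{\nu}(g)\circ B^{l}_{\nu}=B^{l}_{\nu}\circ T^{\nu}(g)$ for every $g\in G$, which is exactly the asserted $G$-invariance. The whole argument rests on two structural features of the Berezin kernel. First, from its closed form (\ref{eq:berezinkernel}) the kernel splits as
$$
B^{\nu}_{l}(z,w)=(1-<z,w>)^{-\nu}\,\Phi\!\left(d(z,w)\right),\qquad \Phi(t)=\cosh^{-2\nu}t\,\big|F(-l,l-\nu+n,n,-\sinh^{2}t)\big|^{2},
$$
where $\Phi$ depends only on the $G$-invariant Bergman distance; by (\ref{eq:hyperbolicdistance}) one has $d(g.z,g.w)=d(z,w)$, so the factor $\Phi$ is strictly $G$-invariant. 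Second, the only non-invariant factor, $(1-<z,w>)^{-\nu}$, obeys the automorphy relation $1-<g.z,g.w>=(1-<z,w>)/\big((cz+d)\overline{(cw+d)}\big)$ recorded in Section 2.

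First I would combine these two facts to derive the quasi-invariance of the full kernel,
$$
B^{\nu}_{l}(g.z,g.w)=(cz+d)^{\nu}\,\overline{(cw+d)}^{\nu}\,B^{\nu}_{l}(z,w).
$$
Here $(cz+d)^{-\nu}=J(g,z)^{\nu/(n+1)}$ is precisely the factor of automorphy appearing in the representation (\ref{eq:gaction}) (consistent with the normalization $J(k,z)^{\nu/(n+1)}=d^{-\nu}$ on $K$); this is what ties the kernel transformation to $T^{\nu}$.

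Next I would insert the definition (\ref{eq:definition}) into $B^{l}_{\nu}[T^{\nu}(g)F](z)$ and perform the change of variable $u=g^{-1}.w$. Two transformation laws feed in simultaneously: the measure transforms by $d\mu_{\nu}(g.u)=|cu+d|^{-2\nu}\,d\mu_{\nu}(u)$ (a short computation from the automorphy relation with $z=w=u$ together with $dm(g.u)=|cu+d|^{-2(n+1)}dm(u)$), and the cocycle factor $J(g^{-1},w)^{\nu/(n+1)}$ becomes $(cu+d)^{\nu}$ after using the cocycle identity $J(g^{-1},g.u)=J(g,u)^{-1}$. Writing also $z=g.v$ with $v=g^{-1}.z$ and applying the kernel identity above to $B^{\nu}_{l}(g.v,g.u)$, the accumulated scalar factors collapse through the elementary cancellation $(cu+d)^{\nu}\,\overline{(cu+d)}^{\nu}\,|cu+d|^{-2\nu}=1$, leaving
$$
B^{l}_{\nu}[T^{\nu}(g)F](z)=(cv+d)^{\nu}\,B^{l}_{\nu}F(v).
$$
Since $J(g^{-1},z)^{\nu/(n+1)}=(cv+d)^{\nu}$ by the same cocycle identity, the right-hand side is exactly $T^{\nu}(g)[B^{l}_{\nu}F](z)$, which closes the argument.

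The main obstacle is bookkeeping rather than conceptual: one must keep the branches of the powers $(cz+d)^{\nu}$ consistent throughout, precisely the ``mild ambiguity'' flagged after (\ref{eq:gaction}). The cancellation that drives the proof, $(cu+d)^{\nu}\overline{(cu+d)}^{\nu}=|cu+d|^{2\nu}$, is legitimate only if the branch chosen for the holomorphic factor of automorphy agrees with the one used inside the kernel identity; making this precise amounts to fixing the projective cocycle defining $T^{\nu}$ once and for all, after which the identity holds exactly and the $G$-invariance follows.
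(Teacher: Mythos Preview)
Your proof is correct and follows essentially the same route as the paper's: both arguments establish the quasi-invariance of the Berezin kernel under the $G$-action and then perform a change of variable in the defining integral so that the cocycle factors cancel against the Jacobian of the measure. The only cosmetic differences are that you start from $B^{l}_{\nu}[T^{\nu}(g)F]$ while the paper starts from $T^{\nu}(g)[B^{l}_{\nu}F]$, and you read the quasi-invariance directly off the factored form (\ref{eq:berezinkernel}) of the kernel whereas the paper derives it componentwise from the reproducing kernel expression (\ref{eq:berkernel1}).
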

\textbf{Proof.} Let $$g^{-1}=\left(\begin{array}{cc}
                         a & b \\
                         c & d
                       \end{array}\right)\in G$$
\begin{equation}\label{eq:inv1}
T^{\nu}_{g}[B^{\nu}_{l}F](z)=(cz+d)^{-\nu}\int_{\Bbb B^{n}}(1-<g^{-1}.z,w>)^{-\nu}\frac{|K^{\nu}_{l}(g^{-1}.z,w)|^{2}}{K^{\nu}_{l}(g^{-1}.z,g^{-1}.z)K^{\nu}_{l}(w,w)}F(w)d\mu_{\nu}(w).
\end{equation}
By using the formula:
\begin{equation}\label{eq:inv2}
1-<g^{-1}z,g^{-1}\zeta>=\frac{(1-<z,\zeta>)}{(cz+d)(\overline{c\zeta+d})}
\end{equation}

for $z\in \Bbb B^{n},$ and $\zeta=g.w$, we obtain:
\begin{equation}\label{eq:inv3}
(1-<g^{-1}z,w>)^{-\nu}=(cz+d)^{\nu}(\overline{cg.w+d})^{\nu}(1-<z,g.w>)^{-\nu},
\end{equation}

\begin{equation}\label{eq:inv4}
K^{\nu}_{l}(g^{-1}.z,w)=(cz+d)^{\nu}(\overline{cg.w+d})^{\nu}K^{\nu}_{l}(z,g.w),
\end{equation}
and
\begin{equation}\label{eq:inv5}
K^{\nu}_{l}(g^{-1}.z,g^{-1}.z)=|cz+d|^{2\nu}K^{\nu}_{l}(z,z),
\end{equation}
Then, by inserting (\ref{eq:inv3}),(\ref{eq:inv4}) and (\ref{eq:inv5}) in (\ref{eq:inv1}) we obtain:

\begin{equation}\label{eq:inv6}
T^{\nu}_{g}[B^{\nu}_{l}F](z)=\int_{\Bbb B^{n}}(\overline{cg.w+d})^{\nu}|cg.w+d|^{2\nu}(1-<z,g.w>)^{-\nu}\frac{|K^{\nu}_{l}(z,g.w)|^{2}}{K^{\nu}_{l}(z,z)K^{\nu}_{l}(w,w)}F(w)d\mu_{\nu}(w).
\end{equation}
By using the change of variable $\zeta=g^{-1}w,$ The equation (\ref{eq:inv6}) becomes:
$$
T^{\nu}_{g}[B^{\nu}_{l}F](z)=\int_{\Bbb B^{n}}(\overline{c\zeta+d})^{\nu}|c\zeta+d|^{2\nu}(1-<z,\zeta>)^{-\nu}\frac{|K^{\nu}_{l}(z,\zeta)|^{2}}{K^{\nu}_{l}(z,z)K^{\nu}_{l}(g^{-1}\zeta,g^{-1}\zeta)}
$$

\begin{equation}\label{eq:inv7}
\times F(g^{-1}\zeta)d\mu_{\nu}(g^{-1}\zeta).
\end{equation}

After  inserting the equation (\ref{eq:inv5}) for which $z$ is replaced by $\zeta$ in the equation (\ref{eq:inv7}) and using the fact that the measure $d\mu_{\nu}(\zeta)=(1-|\zeta|^{2})^{\nu-n-1}dm(\zeta),$ is G-invariant, the equation (\ref{eq:inv7}) becomes

\begin{equation}
T^{\nu}_{g}[B^{\nu}_{l}F](z)=\int_{\Bbb B^{n}}(c\zeta+d)^{-\nu}(1-<z,\zeta>)^{-\nu}\frac{|K^{\nu}_{l}(z,\zeta)|^{2}}{K^{\nu}_{l}(z,z)K^{\nu}_{l}(\zeta,\zeta)} F(g^{-1}\zeta)d\mu_{\nu}(\zeta).
\end{equation}
$$=B^{\nu}_{l}[T^{\nu}_{g}F](z).$$ This ends the proof.\\

In order to prove that $B^{\nu}_{l},$ is a bounded operator, we have need the following lemma.
\begin{lemma}
The generalized berezin kernel $B^{\nu}_{l}(z,w)$ given in (\ref{eq:berezinkernel}) admits also the following expression:
\begin{equation}\label{eq:berezinth}
B^{\nu}_{l}(z,w)=(1-<z,w>)^{-\nu}\cosh^{(4l-2\nu)} (d(z,w))|F(-l,-l+\nu,n,\tanh^{2}(d(z,w)))|^{2}
\end{equation}
\end{lemma}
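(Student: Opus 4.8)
The plan is to reduce the claimed identity (\ref{eq:berezinth}) to a single algebraic identity between the two hypergeometric factors in (\ref{eq:berezinkernel}) and (\ref{eq:berezinth}), and then to invoke a classical transformation of the Gauss hypergeometric function. Writing $t=d(z,w)$ for brevity and recalling the elementary relations $\cosh^{2}t=1+\sinh^{2}t$ and $\tanh^{2}t=\sinh^{2}t/\cosh^{2}t$, a comparison of (\ref{eq:berezinkernel}) with the target (\ref{eq:berezinth}) shows that it suffices to prove the pointwise identity
\begin{equation}
F(-l,l-\nu+n,n,-\sinh^{2}t)=\cosh^{2l}t\,F(-l,-l+\nu,n,\tanh^{2}t).
\end{equation}
Indeed, taking squared modulus of this and multiplying through by $(1-<z,w>)^{-\nu}\cosh^{-2\nu}t$ turns the right-hand side of (\ref{eq:berezinkernel}) into the right-hand side of (\ref{eq:berezinth}), since $-2\nu+4l=4l-2\nu$.

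The identity above is exactly the Pfaff transformation
\begin{equation}
{}_{2}F_{1}(a,b;c;x)=(1-x)^{-a}\,{}_{2}F_{1}\left(a,c-b;c;\frac{x}{x-1}\right),
\end{equation}
applied with $a=-l$, $b=l-\nu+n$, $c=n$ and $x=-\sinh^{2}t$. With these choices one computes $1-x=\cosh^{2}t$, so that $(1-x)^{-a}=\cosh^{2l}t$; next $x/(x-1)=\sinh^{2}t/\cosh^{2}t=\tanh^{2}t$; and finally $c-b=n-(l-\nu+n)=\nu-l=-l+\nu$. Substituting these three computations into the transformation yields precisely the displayed identity. Because $a=-l$ is a nonpositive integer, both hypergeometric series terminate after $l+1$ terms, so the two sides are genuine polynomials in $\sinh^{2}t$ (respectively $\tanh^{2}t$); hence the transformation holds as a literal polynomial identity, valid for all $t$ with no convergence restriction on the argument.

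With the hypergeometric identity in hand, the lemma follows by direct substitution: replacing $F(-l,l-\nu+n,n,-\sinh^{2}t)$ in (\ref{eq:berezinkernel}) by $\cosh^{2l}t\,F(-l,-l+\nu,n,\tanh^{2}t)$ and collecting the resulting powers of $\cosh t$ gives (\ref{eq:berezinth}). There is essentially no analytic obstacle here; the only points that require care are the selection of the correct transformation (Pfaff rather than the Euler transformation, so that the argument becomes $\tanh^{2}t$ and the second parameter is sent to $\nu-l$, which is the form needed in the subsequent sections) and the careful bookkeeping of the exponent $4l-2\nu$ together with the parameter arithmetic $c-b=-l+\nu$, so that both match the normalization used later in the derivation of the main result.
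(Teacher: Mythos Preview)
Your argument is correct and is essentially the same as the paper's proof: both start from (\ref{eq:berezinkernel}), apply the transformation ${}_{2}F_{1}(a,b;c;x)=(1-x)^{-a}{}_{2}F_{1}(a,c-b;c;x/(x-1))$ with $a=-l$, $b=l-\nu+n$, $c=n$, $x=-\sinh^{2}t$, and collect the resulting $\cosh$ powers. The only difference is terminological---the paper labels this identity the ``Euler formula'' while you correctly call it the Pfaff transformation---and you add the useful remark that, since $a=-l$ is a nonpositive integer, both hypergeometric series terminate, so no convergence issue arises.
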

\textbf{Proof.}
From (\ref{eq:berezinkernel}), we have:
$$
B^{\nu}_{l}(z,w)=(1-<z,w>)^{-\nu}\cosh^{-2\nu} d(z,w)|F(-l,l-\nu+n,n,-\sinh^{2}d(z,w))|^{2}.
$$
Making use of the Euler formula (\cite{magn:66},p.47)
\begin{equation}\label{eq:Gaussformula}
F(a,b,c)=(1-z)^{-a}F(a,c-b,\frac{z}{z-1})
\end{equation}
for $a=-l, b=l+n-\nu$ and $c=n$, we obtain:

\begin{equation}
F(-l,l-\nu+n,n,-\sinh^{2}d(z,w))=ch^{2l}d(z,w)F(-l,\nu-l,n;\tanh^{2}d(z,w)),
\end{equation}
then by inserting (\ref{eq:Gaussformula}) in the above expression of $ B^{\nu}_{l}(z,w),$ we get the desired result. This ends the proof.\\

We have the following proposition:

\begin{proposition}. The Berezin transform $B^{l}_{\nu}$ is a bounded operator on $L^{p}(\mathbb{B}^{n},d\mu_{\nu})$ for $1\le p\le \infty$
\end{proposition}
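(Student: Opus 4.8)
The plan is to establish the two Schur bounds
$$\sup_{z\in\mathbb{B}^{n}}\int_{\mathbb{B}^{n}}|B^{\nu}_{l}(z,w)|\,d\mu_{\nu}(w)<\infty,\qquad \sup_{w\in\mathbb{B}^{n}}\int_{\mathbb{B}^{n}}|B^{\nu}_{l}(z,w)|\,d\mu_{\nu}(z)<\infty,$$
and then to conclude by Schur's test combined with Riesz--Thorin interpolation: the first bound gives $B^{l}_{\nu}\colon L^{\infty}\to L^{\infty}$ and, by Tonelli's theorem, the second gives $B^{l}_{\nu}\colon L^{1}\to L^{1}$, whence boundedness on every $L^{p}(\mathbb{B}^{n},d\mu_{\nu})$, $1\le p\le\infty$, with operator norm controlled by the above suprema.

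The starting point is the Lemma, i.e.\ formula (\ref{eq:berezinth}). Since $\cosh d$ and the hypergeometric factor are real, taking absolute values and inserting the identity $|1-<z,w>|^{2}=\cosh^{2}d(z,w)\,(1-|z|^{2})(1-|w|^{2})$ from (\ref{eq:hyperbolicdistance}) yields
$$|B^{\nu}_{l}(z,w)|=(1-|z|^{2})^{-\nu/2}(1-|w|^{2})^{-\nu/2}\,\Phi(d(z,w)),\qquad \Phi(t):=\cosh^{4l-3\nu}t\,\bigl|F(-l,-l+\nu,n,\tanh^{2}t)\bigr|^{2}.$$
Because $-l$ is a negative integer, the hypergeometric factor is a polynomial of degree $l$ in $\tanh^{2}t$, hence bounded for $t\in[0,\infty)$. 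In particular $|B^{\nu}_{l}(z,w)|$ is symmetric in $(z,w)$, so the two Schur integrals above coincide and it suffices to bound the first.

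Next, writing $d\mu_{\nu}(w)=(1-|w|^{2})^{\nu}\,d\mu_{0}(w)$ with $d\mu_{0}(w)=(1-|w|^{2})^{-n-1}dm(w)$ the $G$-invariant measure, the first Schur integral becomes
$$\int_{\mathbb{B}^{n}}|B^{\nu}_{l}(z,w)|\,d\mu_{\nu}(w)=\int_{\mathbb{B}^{n}}\Bigl(\tfrac{1-|w|^{2}}{1-|z|^{2}}\Bigr)^{\nu/2}\Phi(d(z,w))\,d\mu_{0}(w).$$
I would then invoke the elementary comparison between the weight ratio and the invariant distance, $e^{-2d(z,w)}\le\frac{1-|w|^{2}}{1-|z|^{2}}\le e^{2d(z,w)}$, to dominate the integrand by $e^{\nu d(z,w)}\Phi(d(z,w))$. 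Using the $G$-invariance of $d\mu_{0}$ and of $d(\cdot,\cdot)$, the resulting integral is independent of $z$:
$$\int_{\mathbb{B}^{n}}|B^{\nu}_{l}(z,w)|\,d\mu_{\nu}(w)\le\int_{\mathbb{B}^{n}}e^{\nu d(0,u)}\Phi(d(0,u))\,d\mu_{0}(u)=:M.$$

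It remains to prove $M<\infty$, and this is where the hypothesis $0\le l<\frac{\nu-n}{2}$ enters and is, I expect, the only genuine point of the argument. Passing to geodesic polar coordinates around the origin, the radial part of $d\mu_{0}$ is $c\,\cosh t\,\sinh^{2n-1}t\,dt\,d\sigma$, so that
$$M=c'\int_{0}^{\infty}e^{\nu t}\cosh^{4l-3\nu}t\,\bigl|F(-l,-l+\nu,n,\tanh^{2}t)\bigr|^{2}\,\cosh t\,\sinh^{2n-1}t\,dt.$$
Near $t=0$ the integrand is $O(t^{2n-1})$ and hence integrable, while as $t\to\infty$ the hypergeometric factor stays bounded and the remaining factors behave like $e^{(4l-2\nu+2n)t}$. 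The integral therefore converges exactly when $4l-2\nu+2n<0$, that is $l<\frac{\nu-n}{2}$, which is precisely our standing assumption on $l$. This gives $M<\infty$, yields both Schur bounds by symmetry, and together with the interpolation step completes the proof. The main obstacle is thus isolating the exponential growth/decay balance at infinity and verifying that it is governed exactly by the admissibility condition on $l$; the distance--weight comparison and the polar-coordinate form of $d\mu_{0}$ are the supporting ingredients.
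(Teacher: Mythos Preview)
Your proof is correct and follows the same overall architecture as the paper's: bound the kernel using the Lemma, show the hypergeometric factor is a bounded polynomial in $\tanh^{2}d$, establish the single Schur bound (the kernel's modulus is symmetric), deduce $L^{\infty}$ and $L^{1}$ boundedness, and interpolate via Riesz--Thorin.

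Where you diverge is in the estimate of the Schur integral itself. The paper bounds
\[
|B^{l}_{\nu}(z,w)|\le M\,\frac{(1-|z|^{2})^{\nu-2l}(1-|w|^{2})^{\nu-2l}}{|1-\langle z,w\rangle|^{3\nu-4l}}
\]
and then invokes the classical Forelli--Rudin asymptotics $\int_{\mathbb{B}^{n}}\frac{(1-|w|^{2})^{t}}{|1-\langle z,w\rangle|^{n+1+t+c}}\,dm(w)\thickapprox(1-|z|^{2})^{-c}$ (with $t=2\nu-2l-n-1$, $c=\nu-2l$) to conclude. Your route instead factors $|B^{\nu}_{l}(z,w)|=(1-|z|^{2})^{-\nu/2}(1-|w|^{2})^{-\nu/2}\Phi(d(z,w))$, applies the Harnack-type comparison $e^{-2d}\le\frac{1-|w|^{2}}{1-|z|^{2}}\le e^{2d}$ (which follows from the triangle inequality and $\cosh d(0,z)=(1-|z|^{2})^{-1/2}$), and uses $G$-invariance of $d\mu_{0}$ to reduce to a single radial integral whose convergence at infinity is governed by the exponent $4l-2\nu+2n$. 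Your argument is more self-contained (no appeal to the Forelli--Rudin lemma) and makes the role of the admissibility condition $l<\frac{\nu-n}{2}$ completely transparent; the paper's version is shorter once one is willing to quote that standard integral estimate. Both are perfectly valid.
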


Proof. We start by proving that $B^{l}_{\nu}$  is bounded operator in $L^{\infty}(\mathbb{B}^{n},d\mu_{\nu})$. \\

First observe that since $l$ is a positive integer then
$$
 |F(-l,-l+\nu,n,\tanh^{2}d(z,w))|^{2}\le M,
$$
for some positive constant $ M=M(\nu,l)$.
Thus
\begin{eqnarray}
\mid B^{l}_{\nu}(z,w)\mid \le M \mid 1-<z,w>\mid^{-3\nu} (1-\mid z\mid^{2})^{\nu}(1-\mid w\mid^{2})^{\nu} (\cosh^{2}d(z,w))^{2l},
\end{eqnarray}
from which we deduce that:
\begin{eqnarray}
\mid B^{l}_{\nu}(z,w)\mid \le M \frac{(1-\mid z\mid^{2})^{\nu-2l}(1-\mid w\mid^{2})^{\nu-2l}}{\mid 1-<z,w>\mid^{3\nu-4l}}.
\end{eqnarray}
Now we recall a result in \cite{ruwa:80} on the asymptotic behavior of certain integrals.

Let $c$ and $t$ be real numbers such $c>0$  and $t>-1$. Then,

\begin{eqnarray}\label{eq:convergenceintegral}
(1-\mid z\mid^{2})^{-c} \thickapprox
\int_{\mathbb{B}^{n}}\frac{(1-\mid w\mid^{2})^{t} dm(w)}{\mid 1-<z,w>\mid^{n+1+t+c}}.
\end{eqnarray}

In above the notation $a(z)\thickapprox b(z)$ means that the ratio $\frac{a(z)}{b(z)}$ has a positive limit as $\mid z\mid$ goes to $1$.\\

By using (\ref{eq:convergenceintegral}) with $t=2\nu-2l-n-1$ and $c=\nu-2l$ we deduce easily that,
\begin{eqnarray}\label{eq:integralestimate}
\int_{\mathbb{B}^{n}}\mid B^{l}_{\nu}(z,w)\mid d\mu_{\nu}(w)\le C,
\end{eqnarray}
for some positive constant $C=C_{\nu,l}$.\\
It follows from above that for every $F\in L^{\infty}(\mathbb{B}^{n},d\mu_{\nu})$ we have
$$
\parallel B^{l}_{\nu}F\parallel_{\infty}\le C  \parallel F\parallel_{\infty},
$$
therefore $B^{l}_{\nu}$ is bounded on $L^{\infty}(\mathbb{B}^{n},d\mu_{\nu})$.\\

Next let  $F\in L^{1}(\mathbb{B}^{n},d\mu_{\nu})$. Then,
$$
\parallel B^{l}_{\nu}F\parallel_{1}\le \int_{ \mathbb{B}^{n}}F(w)(\int_{ \mathbb{B}^{n}}\mid B^{l}_{\nu}(z,w)\mid d\mu_{\nu}(z))d\mu_{\nu}(w),
$$
it follows from (\ref{eq:integralestimate}) that
$$
\parallel B^{l}_{\nu}F\parallel_{1}\le  C \parallel F\parallel_{1},
$$
therefore the Berezin transform is bounded on $L^{1}(\mathbb{B}^{n},d\mu_{\nu})$ and the result follows from The Riesz-Thorin Theorem.

\section{The Berezin Transform as function of the invariant Laplacian.}

In this section, we shall express the transform $B^{l}_{\nu}$ as a function of the invariant Laplacian $\Delta_{\nu}$.
To this end we need the following lemma.\\
\begin{lemma}
 Let $j=0,1,.....,[\frac{\nu-n}{2}].$ Then,the complex numbers
\begin{equation}
  \xi_{j}=(\nu-n-2j)i, Im(\xi_{j})>0,
  \end{equation}
   are poles of the function $\eta(\lambda)=(C_{\nu}(\lambda)C_{\nu}(-\lambda))^{-1}$ and we have:
\begin{equation}\label{eq:linkformula}
c_{j}=2^{2(n-\nu)}\frac{\Gamma(n)}{\pi^{n}}(-iRes(\eta;\lambda=\xi_{j})),
\end{equation}
where $Res(\eta;\lambda=\xi_{j})$ means the residue of the function $\eta(\lambda)$ at $\lambda=\xi_{j}.$
and the constant $c_{j}$ is the constant defined by (\ref{eq:constantreproduce})
\end{lemma}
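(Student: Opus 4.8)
The plan is to work directly from the closed form of the Harish-Chandra $c$-function in (\ref{eq:harichanda}). First I would multiply $c_{\nu}(\lambda)$ by $c_{\nu}(-\lambda)$ and invert, which gives
$$\eta(\lambda)=\bigl(c_{\nu}(\lambda)c_{\nu}(-\lambda)\bigr)^{-1}=\frac{2^{2(\nu-n)}}{\Gamma(n)^{2}}\,\frac{\Gamma\!\left(\tfrac{i\lambda+n-\nu}{2}\right)\Gamma\!\left(\tfrac{i\lambda+n+\nu}{2}\right)\Gamma\!\left(\tfrac{-i\lambda+n-\nu}{2}\right)\Gamma\!\left(\tfrac{-i\lambda+n+\nu}{2}\right)}{\Gamma(i\lambda)\Gamma(-i\lambda)}.$$
Since $\Gamma$ has no zeros, the poles of $\eta$ can only come from the four $\Gamma$-factors in the numerator, and the task is to isolate which one is singular at $\lambda=\xi_{j}=i(\nu-n-2j)$.

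Next I would substitute $\lambda=\xi_{j}$ into each argument. A short computation gives the values $n-\nu+j$, $n+j$, $-j$ and $\nu-j$ for the four numerator factors, while $\Gamma(\pm i\lambda)$ take arguments $\mp(\nu-n-2j)$. Because $\nu\notin\Z$, all of these are non-integers except the third one, whose argument hits the non-positive integer $-j$. Hence only $\Gamma\!\left(\tfrac{-i\lambda+n-\nu}{2}\right)$ is singular and every other factor is regular and non-zero at $\xi_{j}$; this simultaneously proves that $\xi_{j}$ is a simple pole and reduces the residue to that of a single $\Gamma$-factor.

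To extract the residue I would use $\mathrm{Res}(\Gamma;-j)=\tfrac{(-1)^{j}}{j!}$ together with the chain-rule factor $\tfrac{d}{d\lambda}\tfrac{-i\lambda+n-\nu}{2}=-\tfrac{i}{2}$, so that near $\xi_{j}$ the singular factor behaves like $\tfrac{(-1)^{j}}{j!}\cdot\tfrac{2i}{\lambda-\xi_{j}}$. Multiplying by the values of the regular factors then yields a closed expression for $\mathrm{Res}(\eta;\lambda=\xi_{j})$ in terms of $\Gamma(n-\nu+j)$, $\Gamma(n+j)$, $\Gamma(\nu-j)$ and $\Gamma(\pm i\xi_{j})$. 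It remains to verify (\ref{eq:linkformula}), i.e. that $2^{2(n-\nu)}\tfrac{\Gamma(n)}{\pi^{n}}(-i)$ times this residue equals the $c_{j}$ of (\ref{eq:constantreproduce}).

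The last step is the main obstacle, and it is where the reflection formula $\Gamma(z)\Gamma(1-z)=\pi/\sin(\pi z)$ does all the work, applied twice: once to $\Gamma(i\xi_{j})\Gamma(-i\xi_{j})=\Gamma(-a)\Gamma(a)$ with $a=\nu-n-2j$, producing $-\pi/(a\sin\pi a)$, and once to the pair $\Gamma(n-\nu+j)$, $\Gamma(\nu-n-j+1)$ whose arguments sum to $1$. Since $n$ and $j$ are integers one has $\sin\pi(\nu-n-2j)=(-1)^{n}\sin\pi\nu$ and $\sin\pi(n-\nu+j)=(-1)^{n+j+1}\sin\pi\nu$, so the two sine contributions cancel and the accumulated powers of $(-1)$ collapse to $1$, leaving exactly $\tfrac{\nu-n-2j}{\Gamma(\nu-n-j+1)}$; comparison with the explicit $c_{j}$ then finishes the proof. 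The delicate part is the bookkeeping of signs and of the $\sin\pi\nu$ factors, which must cancel precisely in order for the final constant to be real and to coincide with $c_{j}$.
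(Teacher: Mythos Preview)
Your proposal is correct and follows essentially the same route as the paper's proof: write $\eta(\lambda)$ explicitly from (\ref{eq:harichanda}), identify the unique singular $\Gamma$-factor at $\lambda=\xi_{j}$ (the one whose argument becomes $-j$), compute its residue as $\dfrac{2i(-1)^{j}}{j!}$, and then apply the reflection formula $\Gamma(z)\Gamma(1-z)=\pi/\sin\pi z$ twice to collapse the ratio of sines and match the constant $c_{j}$ of (\ref{eq:constantreproduce}). The only organizational difference is that the paper forms the quotient $c_{j}/\mathrm{Res}(\eta;\xi_{j})$ and simplifies it directly to $2^{2(n-\nu)}\Gamma(n)\pi^{-n}(-i)$, whereas you propose computing the residue first and then comparing; the algebra is identical.
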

\textbf{Proof.} From the expression of $C_{\nu},$ the function $\eta(\lambda)$ can be written as
\begin{equation}
\eta(\lambda)=\frac{2^{2(\nu-n)}}{(\Gamma(n))^{2}}\frac{\Gamma(\frac{n+\nu+i\lambda}{2})\Gamma(\frac{n+\nu-i\lambda}{2})\Gamma(\frac{n-\nu-i\lambda}{2})}{\Gamma(i\lambda)\Gamma(-i\lambda)}\Gamma(\frac{n-\nu+i\lambda}{2}),
\end{equation}
is not difficult,to see that,
\begin{equation}
 Res(\eta;\lambda=\xi_{j})=\frac{2^{2(\nu-n)}}{(\Gamma(n))^{2}}\frac{\Gamma(\nu-j)\Gamma(j+n)\Gamma(j+n-\nu)}{\Gamma(\nu-n-2j)\Gamma(2j+n-\nu)}Res(\Gamma(\frac{n-\nu+i\lambda}{2});\lambda=\xi_{j})
\end{equation}
By a direct calculus we get,
$$Res(\Gamma(\frac{n-\nu+i\lambda}{2});\lambda=\xi_{j})=\frac{2i(-1)^{j}}{j!}.$$
Hence, we get:
\begin{equation}\label{eq:residue}
Res(\eta;\lambda=\xi_{j})=(-1)^{j}\frac{2^{2(\nu-n)+1}}{j!(\Gamma(n))^{2}}\frac{\Gamma(\nu-j)\Gamma(j+n)\Gamma(j+n-\nu)}{\Gamma(\nu-n-2j)\Gamma(2j+n-\nu)}i.
\end{equation}
Recall that the constant $c_{j},$ is given by
$$
c_{j}=\frac{2\Gamma(n+j)}{\pi^{n}\Gamma(n)j!}\frac{(\nu-n-2j)\Gamma(\nu-j)}{\Gamma(\nu-n-j+1)},
$$
From this expression of $c_{j}$ and the equation (\ref{eq:residue}) we obtain:
\begin{equation}\label{eq:quotient}
\frac{c_{j}}{Res(f;\lambda=\xi_{j})}=\frac{(-1)^{j}2^{2(n-\nu)}\Gamma(n)}{\pi^{n}}\frac{\Gamma(2j+n-\nu)\Gamma(1-(2j+n-\nu))}{\Gamma(j+n-\nu)\Gamma(1-(j+n-\nu))}(-i),
\end{equation}

By using the formula (\cite{magn:66},p.2),
$$\Gamma(z)\Gamma(1-z)=\frac{\pi}{sin \pi z},z\neq 0,-1,1,-2,2,......,$$
the equation (\ref{eq:quotient}) becomes,
\begin{equation}
\frac{c_{j}}{Res(f;\lambda=\xi_{j})}=\frac{(-1)^{j}2^{2(n-\nu)}\Gamma(n)}{\pi^{n}}\frac{\sin \pi(j+n-\nu)}{\sin \pi(2j+n-\nu)}(-i),
\end{equation}
$$=\frac{2^{2(n-\nu)}\Gamma(n)}{\pi^{n}}(-i).$$
Thus we get the desired formula (\ref{eq:linkformula}).This ends the proof.

Since the  the Berezin transform $B^{l}_{\nu}$ is a bounded operator on $L^{2}(\mathbb{B}^{n},d\mu_{\nu})$ commuting with the representation $T_{\nu}$ it follows that it is a function of the $G$-invariant Laplacian $\Delta_{\nu}$.\\
Namely there exists a $\mathbb{C}$-valued Borelian function $h$ on $\mathbb{R}$ such that
$$
 B^{\nu}_{l}=h(\Delta_{\nu}).
$$
 in other hand, since we dispose of the spectral function (\ref{eq:spectralfunction}) of the shifted $G-$ invariant Laplacian $\widetilde{\Delta_{\nu}}=-(\Delta_{\nu}+(n-\nu)^{2}),$ it will be natural to give $B^{\nu}_{l}$ in terms of $\widetilde{\Delta_{\nu}},$ instead of

 $\Delta_{\nu}$. That is

\begin{equation}
B^{\nu}_{l}=f(\widetilde{\Delta_{\nu}}),
\end{equation}

for some complex valued Borelian function $f$ on $\mathbb{R}$.\\

The main result of this paper is:
\begin{theorem}
Let $l=0,1,2,.....;[\frac{\nu-n}{2}],$ Then, the Berezin transform $B^{\nu}_{l}$ defined by (\ref{eq:berkernel1}) and (\ref{eq:definition})can be expressed in terms of the $G-$invariant operator $\Delta_{\nu}$ as:
$$
B^{\nu}_{l}=\frac{\pi^{n}\Gamma^{2}(n)}{2\Gamma^{2}(\nu-l)}|\Gamma(\frac{i\sqrt{-(\Delta_{\nu}+(n-\nu)^{2})}+3\nu-4l-n}{2})|^{2}
$$

\begin{equation}
\times\sum_{q=0}^{2l}(-1)^{q}A_{q}\frac{S_{q}(\frac{-(\Delta_{\nu}+(n-\nu)^{2})}{4},\frac{3\nu-4l-n}{2},\frac{\nu+n}{2},\frac{n-\nu}{2})}{\Gamma(2(\nu-l)+q)\Gamma(\nu-2l+q)}
\end{equation}

Where $S_{q}$ denotes the continuous dual Hahn polynomial.
\end{theorem}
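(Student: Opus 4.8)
The plan is to exploit that boundedness and $G$-invariance already force $B^{\nu}_{l}=f(\widetilde{\Delta}_{\nu})$ for a Borel function $f$, and to pin down $f$ by matching radial kernels through the spectral density of the earlier Proposition. Concretely, by the functional-calculus formula for $f(\widetilde{\Delta}_{\nu})$, its kernel is $(1-\langle z,w\rangle)^{-\nu}$ times an integral of $f(s)$ against the Jacobi function $\phi^{(n-1,-\nu)}_{\sqrt{s}}(d(z,w))$ weighted by $|C_{\nu}(\sqrt{s})|^{-2}s^{-1/2}$, plus the discrete terms. Comparing this with the closed form of the Berezin kernel from the Lemma, namely $(1-\langle z,w\rangle)^{-\nu}g(d(z,w))$ with $g(t)=\cosh^{4l-2\nu}t\,|F(-l,\nu-l,n,\tanh^{2}t)|^{2}$, I recognize the comparison as the Fourier--Jacobi inversion formula for the parameters $(\alpha,\beta)=(n-1,-\nu)$. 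Hence, up to the explicit Plancherel constant carried by $C_{\nu}$, the symbol is the Fourier--Jacobi transform $f(\lambda^{2})=\mathrm{const}\int_{0}^{\infty}g(t)\phi^{(n-1,-\nu)}_{\lambda}(t)\Delta_{n-1,-\nu}(t)\,dt$, where $\lambda^{2}=\widetilde{\Delta}_{\nu}$.

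Since $l$ is a non-negative integer, $F(-l,\nu-l,n,\tanh^{2}t)$ is a degree-$l$ polynomial in $\tanh^{2}t$ with real coefficients, so $|F|^{2}=F^{2}$ is a polynomial of degree $2l$; writing $F^{2}=\sum_{q=0}^{2l}d_{q}\tanh^{2q}t$ reduces the computation of $f$ to the Fourier--Jacobi transforms of the elementary profiles $\sinh^{2q}t\,\cosh^{-2(\nu-2l+q)}t$. A Cauchy-product computation identifies the coefficients with the constants of the statement, precisely $d_{q}=(-1)^{q}2^{q}\Gamma(n)^{2}\Gamma(\nu-l)^{-2}A_{q}$, which is exactly where the binomial double sum defining $A_{q}$ originates.

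Each elementary transform I evaluate by the substitution $u=\sinh^{2}t$, turning it into $\int_{0}^{\infty}{}_{2}F_{1}(\tfrac{n-\nu-i\lambda}{2},\tfrac{n-\nu+i\lambda}{2};n;-u)\,u^{n+q-1}(1+u)^{-(2\nu-2l+q)}\,du$; a Pfaff transformation followed by the Euler beta integral converts this into a ${}_{3}F_{2}$ at $1$. The decisive step is a Thomae transformation of this ${}_{3}F_{2}$: it factors out exactly $|\Gamma(\tfrac{3\nu-4l-n+i\lambda}{2})|^{2}$ together with the Gamma quotient producing $\Gamma(2(\nu-l)+q)^{-1}\Gamma(\nu-2l+q)^{-1}$, and leaves a terminating ${}_{3}F_{2}(-q,\tfrac{3\nu-4l-n+i\lambda}{2},\tfrac{3\nu-4l-n-i\lambda}{2};2(\nu-l),\nu-2l;1)$, which is the continuous dual Hahn polynomial $S_{q}$ evaluated at $x^{2}=\lambda^{2}/4$ with parameters $a=\tfrac{3\nu-4l-n}{2}$, $b=\tfrac{\nu+n}{2}$, $c=\tfrac{n-\nu}{2}$; here one uses that $|C_{\nu}(\lambda)|^{-2}$ already supplies the two remaining Gamma factors $|\Gamma(b+ix)\Gamma(c+ix)/\Gamma(2ix)|^{2}$ of the dual-Hahn orthogonality weight. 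Summing over $q$, cancelling the powers of $2$ from the Jacobi weight against those in $d_{q}$, and substituting $\lambda^{2}=-(\Delta_{\nu}+(n-\nu)^{2})$ yields the stated formula.

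The main obstacle I expect is this Thomae/Whipple manipulation: one must select the transformation that simultaneously terminates the series (to expose $S_{q}$) and produces the numerator factor $|\Gamma(\tfrac{3\nu-4l-n+i\lambda}{2})|^{2}$ rather than its reciprocal, since a careless evaluation yields Gamma functions with the wrong argument $\tfrac{5\nu-4l-n\pm i\lambda}{2}$ in the denominator, i.e. the exponentially growing (and hence spurious) branch. Selecting the decaying, bounded-operator branch and tracking every $\Gamma$ and every power of $2$ so that all constants collapse into $\frac{\pi^{n}\Gamma^{2}(n)}{2\Gamma^{2}(\nu-l)}$ is the delicate bookkeeping. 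A secondary point is to verify that the discrete spectral terms at $s_{j}$ are covered by the same closed form through the analytic continuation $\lambda\mapsto\lambda_{j}=i(2j+n-\nu)$, and to confirm absolute convergence of the Fourier--Jacobi transform, which holds because $\nu>(n+4l)/3$ under the standing constraint $0\le l<(\nu-n)/2$.
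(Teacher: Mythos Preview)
Your proposal is correct and follows essentially the same route as the paper's proof: match the Berezin kernel against the spectral-density kernel of $\widetilde{\Delta}_{\nu}$, recognize the resulting equation as the Fourier--Jacobi inversion formula with parameters $(\alpha,\beta)=(n-1,-\nu)$, expand the square of the terminating hypergeometric as a polynomial of degree $2l$ in $\tanh^{2}t$, integrate term by term to a ${}_{3}F_{2}(1)$, and then apply a Thomae-type transformation to expose the terminating ${}_{3}F_{2}$ that is the continuous dual Hahn polynomial $S_{q}$. The only cosmetic differences are that the paper first rewrites $F(-l,\nu-l,n,\tanh^{2}t)$ as a Jacobi polynomial and quotes a product formula from \cite{tapa:87} to obtain the $A_{q}$ (where you do the Cauchy product directly), and it uses the substitution $y=\tanh^{2}t$ after applying the Euler/Pfaff relation to $\phi^{(n-1,-\nu)}_{\lambda}$, whereas you substitute $u=\sinh^{2}t$ first and then Pfaff; both routes land on the same ${}_{3}F_{2}$ integral identity.
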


\textbf{Proof.}
Recall from (\ref{eq:functionofoperator}) and (\ref{eq:kerneloffunctionofoperator}) that the operator $f(\widetilde{\Delta}_{\nu})$ is the Distributional operator with the Schwartz kernel:
\begin{equation}
\Psi_{g}(z,w)=\int_{\mathbb{R}}e(s,z,w)f(s)ds,
\end{equation}
where $e(s,z,w)$ is the spectral function (\ref{eq:spectralfunction}) associated to the operator $\widetilde{\Delta}_{\nu}$.

Then, the equation $B^{\nu}_{l}=f(\widetilde{\Delta}_{\nu}),$ implies that:
\begin{equation}
B^{\nu}_{l}(z,w)=\int_{\mathbb{R}} e(s,z,w)f(s)ds.
\end{equation}
Hence,
$$
B^{\nu}_{l}(z,w)=\frac{\Gamma(n)}{4\pi^{n+1}2^{2(\nu-n)}}(1-<z,w)^{-\nu}[\int_{0}^{+\infty}s^{\frac{-1}{2}}|C_{\nu}(S^{\frac{1}{2}}|^{-2}\phi^{n-1,-\nu}_{s^{\frac{1}{2}}}(t)f(s)ds
$$
\begin{equation}\label{eq:equalityofkernels}
+\sum_{l=0}^{\frac{\nu-n}{2}}c_{j}\phi_{s_{j}^{\frac{1}{2}}}^{(n-1,-\nu)}(t)f(s_{j})]
\end{equation}
where $s_{j}^{\frac{1}{2}}=\lambda_{j}$ and $\lambda_{j}$ are the parameters defined in (\ref{eq:lambdadeboussejra}).

Recall from (\ref{eq:berezinth}), that the Berezin kernel $B^{\nu}_{l}(z,w)$ has the following expression:
$$
B^{\nu}_{l}(z,w)=(1-<z,w>)^{-\nu}\cosh^{(4l-2\nu)} (d(z,w))|F(-l,-l+\nu,n,\tanh^{2}(d(z,w)))|^{2}.
$$
Then, after replacing the Berezin kernel $B^{\nu}_{l}(z,w)$ by its above expression in where we have set $t=d(z,w)$ , and using the change of variable $s=\lambda^{2},$  in the first integral of (\ref{eq:equalityofkernels}) and $s_{j}=\lambda_{j}^{2},$ in the discreet part, we obtain:

$$
\cosh^{(4l-2\nu)} (t)|F(-l,-l+\nu,n,\tanh^{2}(t))|^{2}=
$$

$$
\frac{\Gamma(n)}{4\pi^{n+1}2^{2(\nu-n)}}
\int_{0}^{+\infty}2|C_{\nu}(\lambda)|^{-2}\phi^{n-1,-\nu}_{\lambda}(t)f(\lambda^{2})d\lambda
$$

\begin{equation}
+\sum_{j=0}^{\frac{\nu-n}{2}}c_{j}\phi_{\lambda_{j}}^{(n-1,-\nu)}(t)f(\lambda_{j}^{2}).
\end{equation}

Putting $ g(\lambda)=f(\lambda^{2}).$ Then, the above equation becomes:

$$
\frac{\Gamma(n)}{2\pi^{n+1}2^{2(\nu-n)}}\int_{0}^{+\infty}\mid C_{\nu}(\lambda)\mid^{-2}\phi^{(n-1,-\nu)}_{\lambda}(t)g(\lambda)d\lambda
+\sum_{j=0}^{\frac{\nu-n}{2}}c_{j}\phi_{\lambda_{j}}^{(n-1,-\nu)}(t)g(\lambda_{j}).
$$

\begin{equation}\label{eq:equationintegral1}
=\cosh^{(4l-2\nu)} (t)|F(-l,-l+\nu,n,\tanh^{2}(t))|^{2}.
\end{equation}

Now, by using the fact :$\phi^{(n-1,-\nu)}_{\lambda}(t)=\phi^{(n-1,-\nu)}_{-\lambda}(t)$ and $g(\lambda)=g(-\lambda)$ and replacing the constants $c_{j}$ by theirs expressions given in the equation (\ref{eq:linkformula}) with taking into a count that $\zeta_{j}=-\lambda_{j}$ the equation (\ref{eq:equationintegral1}) becomes:

$$
\frac{1}{2\pi}\int_{0}^{+\infty}\mid C_{\nu}(\lambda)\mid^{-2}\phi^{(n-1,-\nu)}_{\lambda}(t)g(\lambda)d\lambda
$$

$$
+\sum_{\zeta_{j}\in D_{\nu}} (-iRes\left((C_{\nu}(\lambda)C_{\nu}(-\lambda))^{-1}),\lambda=\xi_{j}\right)\phi_{\zeta_{j}}^{(n-1,-\nu)}(t)g(\zeta_{j}),
$$

\begin{equation}\label{eq:integralequation}
=\frac{2^{2(\nu-n)}\pi^{n}}{\Gamma(n)}(\cosh t)^{-\mu}(F(-l,\nu-l,n,\tanh^{2}t))^{2},
\end{equation}

where $\mu=2\nu-4l,$ and the set $D_{\nu}$ is given by,
$$
 D_{\nu}=\{\zeta_{j}=i(\nu-n-2j),j=0,1,2,...., \nu-n-2j>0\}.
 $$

Now, we recall from \cite{koor:84},\cite{kaji:01}, some properties of the Fourier-Jacobi transform.
Assume that $\alpha>1$ and $|\beta|>\alpha+1.$ Then, the Fourier-Jacobi transform of a $C^{\infty}-$ compactly supported function $f$ on $\mathbb{R}$ is defined by

\begin{equation}\label{eq:jacobitransform}
\widehat{\varphi}(\lambda)=\int_{0}^{+\infty}\phi(t)\phi_{\lambda}^{(\alpha,\beta)}(t)\Delta_{\alpha,\beta}(t)dt
\end{equation}
its inverse is given by:
\begin{equation}\label{eq:inversejacobietransform}
\varphi(t)=\frac{1}{2\pi}\int_{0}^{+\infty}\widehat{\varphi}(\lambda)|C_{\alpha,\beta}(\lambda)|^{-2}d\lambda+\sum_{\lambda\in D\alpha,\beta}d_{\alpha,\beta}(\lambda)\widehat{\varphi}(\lambda)
\end{equation}
Where,
\begin{equation}
d_{\alpha,\beta}(\lambda)=-iRes(C_{\alpha,\beta}(z)C_{\alpha,\beta}(z)^{-1},z=\lambda)\phi_{\lambda}^{(\alpha,\beta)}(\lambda),
\end{equation}
with
\begin{equation}
D_{\alpha,\beta}=\{i(|\beta|-\alpha-1-2j):j=0,...,|\beta|-\alpha-1-2j>0\}
\end{equation}

\begin{equation}
\Delta_{\alpha,\beta}(t)=(2\sinh t)^{2\alpha+1}(2\cosh t)^{2\beta+1},
\end{equation}

\begin{equation}\label{eq:harichandraalphabeta}
C_{\alpha,\beta}(\lambda)=2^{\rho-i\lambda}\frac{\Gamma(\alpha+1)\Gamma(i\lambda)}{\Gamma(\frac{\alpha+\beta+1+i\lambda}{2})\Gamma(\frac{\alpha-\beta+1+i\lambda}{2})},\rho=\alpha+\beta+1
\end{equation}

and $\phi_{\lambda}^{(\alpha,\beta)}(t),$ is the Jacobi function defined in the proposition (\ref{eq:spectralfunction}).\\

Here, from (\ref{eq:inversejacobietransform}), is not difficult to see that the the left hand side of the integral equation, (\ref{eq:integralequation}),
is nothing other than the inverse of Fourier-Jacobi transform of the function $g,$ with $\alpha=n-1;\beta=-\nu.$

In other hand, the function:
\begin{equation}
h(t)=\frac{2^{2(\nu-n)}\pi^{n}}{\Gamma(n)}(\cosh t)^{-\mu}(F(-l,\nu-l,n,\tanh^{2}t))^{2}
\end{equation}
given in the right hand side of the equation (\ref{eq:integralequation}) can be written as:

\begin{equation}
 h(t)=(\cosh t)^{-\mu}\psi(\cosh^{-2}t),
\end{equation}

where $\mu=2\nu-4l$ and $\psi(t),$ is the $C^{\infty}$ function on $[0,1]$ given by:

\begin{equation}
\psi(t)=\frac{2^{2(\nu-n)}\pi^{n}}{\Gamma(n)}(F(-l,\nu-l,n,1-t))^{2}.
\end{equation}

More, in our case $\alpha=n-1,\beta=-\nu,$ it is easy to see that we have the inequality $\mu>\rho,$ where $\rho,$ is the parameter defined in (\ref{eq:harichandraalphabeta}). Then, thanks to (\cite{kaji:01}, $ \widehat{h}(\lambda)$ is well defined on $\mathbb{R}~$. Then by (\ref{eq:jacobitransform}) the function $g$ involved in the integral equation (\ref{eq:integralequation}) is given by:

$$
g(\lambda)=\int_{0}^{+\infty}h(t)\phi_{\lambda}^{(n-1,-\nu)}(t)\Delta_{n-1,-\nu}(t)dt
$$

$$
=\frac{\pi^{n}}{\Gamma(n)}
\int_{0}^{+\infty}\cosh ^{-\mu}t\mid F(-l,\nu-l,n,\tanh^{2} t)\mid^{2}\phi^{(n-1,-\nu)}_{\lambda}(t)
$$

\begin{equation}\label{eq:expression1ofg}
\times(\sinh t)^{2n-1}(\cosh t)^{1-2\nu}dt.
\end{equation}

Using the expression of Jacobi-polynomials (\cite{magn:66}, p.39)

\begin{equation}
P^{(\alpha,\beta)}_{k}(x)=\frac{(\alpha+1)_{k}}{k!}F(-k,\alpha+\beta+1+k,\alpha+1;\frac{1-x}{2}),
\end{equation}

for $k=l,$ $\alpha=n-1,$ $\beta=\nu-n-2l$ and $x=1-2\tanh t,$ we obtain:

\begin{equation}\label{eq:hypjacobi}
F(-l,\nu-l,n,\tanh ^{2}t)= \frac{l!}{(n)_{l}}P^{(n-1,\nu-n-2l)}_{l}(1-2\tanh ^{2}t).
\end{equation}

Then, by using (\ref{eq:hypjacobi}), the Equation (\ref{eq:expression1ofg}) becomes:

$$
g(\lambda)=C_{n,l,\nu}\int_{0}^{+\infty}(\cosh t)^{(4l-2\nu)}( P^{(n-1,\nu-n-2l)}_{l}(1-2\tanh^{2} t))^{2}
$$

\begin{equation}
\times \phi^{(n-1,-\nu)}_{\lambda}(t)(\sinh t)^{2n-1}(\cosh t)^{1-2\nu}dt.
\end{equation}
where,

\begin{equation}
C_{n,l,\nu}=\frac{\pi^{n}(l!)^{2}}{(n)_{l}^{2}\Gamma(n)}
\end{equation}

Use again (\ref{eq:Gaussformula}) to rewrite the Jacobi function $\phi_{\lambda}^{(n-1,-\nu)}$ as follows

$$
\phi_{\lambda}^{(n-1,-\nu)}(t)=(1-\tanh ^{2}t)^{\frac{i\lambda+n-\nu}{2}}F(\frac{i\lambda+n-\nu}{2},\frac{i\lambda+n+\nu}{2},n,\tanh ^{2}t).
$$

Henceforth

$$
g(\lambda)=C_{n,l,\nu}\int_{0}^{+\infty}(1-\tanh ^{2}t)^{\frac{i\lambda-n-4l+3\nu}{2}}\{P^{(n-1,\nu-n-2l)}_{l}(1-2\tanh ^{2}t)\}^{2}
$$

\begin{equation}
\times F(\frac{i\lambda+n-\nu}{2},\frac{i\lambda+n+\nu}{2},n,\tanh ^{2}t)\tanh ^{2n-1}tdt.
\end{equation}

Next make the change of variable $y=\tanh ^{2}t$ to rewrite the above integral as,
$$
g(\lambda)=\frac{C_{n,l,\nu}}{2}\int_{0}^{1}(1-y)^{\frac{i\lambda-n-4l+3\nu-2}{2}}y^{n-1}\{(P^{(n-1,\nu-n-2l)}_{l}(1-2y)\}^{2}
$$

\begin{equation}
\times F(\frac{i\lambda+n-\nu}{2},\frac{i\lambda+n+\nu}{2},n,y)dy.
\end{equation}

By using the following formula power expansion for the product of two Jacobi polynomials \cite{tapa:87} we get:

\begin{equation}
\{(P^{(n-1,\nu-n-2l)}_{l}(1-2y)\}^{2}=\frac{\Gamma^{2}(l+n)}{l!^{2}\Gamma^{2}(\nu-l)}\sum^{2l}_{q=0}(-1)^{q}A_{q}(2y)^{q},
\end{equation}

where

\begin{equation}
A_{q}=2^{-q}\sum_{p=\max (0,q-l)}^{\min (l,q)}(_{q-p}^{l})(_{p}^{l})\frac{\Gamma(\nu-l+q-p)\Gamma(\nu-l+p)}{\Gamma(n+p)\Gamma(n+q-p)}.
\end{equation}

Hence, we are lead to compute the following integral:

\begin{equation}
I_{q}(\lambda)=\int_{0}^{1}(1-y)^{\frac{i\lambda-n-4l+3\nu-2}{2}}y^{n+q-1} F(\frac{i\lambda+n-\nu}{2},\frac{i\lambda+n+\nu}{2},n,y)dy.
\end{equation}

For this, use the following  identity (\cite{grry:07} p:813)

\begin{equation}
\int^{1}_{0}x^{\rho-1}(1-x)^{\sigma-1}F(\alpha,\beta,\gamma;x)dx=\frac{\Gamma(\rho)\Gamma(\sigma)}{\Gamma(\rho+\sigma)}\\
_{3}F_{2}(\alpha,\beta,\rho;\gamma,\rho+\sigma;1),
\end{equation}

for all $\alpha,\beta,\gamma,\rho$ and $\sigma$ such that $\Re\rho>0, \Re\sigma>0$ and $\Re(\gamma+\sigma-\alpha-\beta)>0$, to get:

$$
I_{q}(\lambda)=\frac{\Gamma(n+q)\Gamma(\frac{i\lambda+3\nu-4l-n}{2})}{\Gamma(\frac{i\lambda+3\nu-4l+n}{2}+q)}
$$

\begin{equation}\label{eq:functionI1}
\times _{3}F_{2}(\frac{i\lambda+n-\nu}{2},\frac{i\lambda+n+\nu}{2},n+q;n,\frac{i\lambda+3\nu-4l+n}{2}+q;1).
\end{equation}
\\
Thus, we have established that:
\begin{equation}
g(\lambda)=\frac{\Gamma(n)\pi^{n}}{2\Gamma^{2}(\nu-l)}\sum_{0}^{2l}(-1)^{q}A_{q}I_{q}(\lambda).
\end{equation}
We will use the following identity on hypergeometric functions (\cite{bryu:08},p593)
$$
_{3}F_{2}(a,b,c;d,e;1)=\dfrac{\Gamma(d)\Gamma(e)\Gamma(d+e-c-a-b)}{\Gamma(c)\Gamma(d+e-a-c)\Gamma(d+e-b-c)}
$$
\begin{equation}
\times_{3}F_{2}(d-c,e-c,d+e-c-a-b;d+e-a-c,d+e-b-c;1);
\end{equation}

to rewrite the function $I_{q}(\lambda)$ given in equation (\ref{eq:functionI1}) as:

$$
I_{q}(\lambda)=\frac{\Gamma(n)\Gamma(\frac{-i\lambda+3\nu-4l-n}{2})\Gamma(\frac{i\lambda+3\nu-4l-n}{2})}{\Gamma(2(\nu-l))\Gamma(\nu-2l)}
$$

\begin{equation}\label{eq:functionI2}
\times _{3}F_{2}(-q,\frac{i\lambda+3\nu-4l-n}{2},\frac{-i\lambda+3\nu-4l-n}{2};2(\nu-l),\nu-2l;1).
\end{equation}

By using the formula (\cite{RKRS:98}, p.29)

\begin{equation}\label{eq:Hahnpolynomial}
\quad_{3}F_{2}(-j,a+ix,a-ix;a+b,a+c;1)=\frac{S_{j}(x^{2},a,b,c)}{(a+b)_{j}(a+c)_{j}}
\end{equation}

for $x=\frac{\lambda}{2},$ $a=\frac{3\nu-4l-n}{2},$ $b=\frac{\nu+n}{2},$ $c=\frac{n-\nu}{2}$ and $j=q,$

The above hypergeometric involved in (\ref{eq:functionI2}) becomes:
\begin{equation}
_{3}F_{2}(-q,\frac{i\lambda+3\nu-4l-n}{2},\frac{-i\lambda+3\nu-4l-n}{2}-;2(\nu-l),\nu-2l;1)=\frac{S_{q}(\frac{\lambda^{2}}{4},\frac{3\nu-4l-n}{2},\frac{n-\nu}{2})}{(2(\nu-l))_{q}((\nu-2l)_{q}}
\end{equation}

where $S_{j}(x^{2},a,b,c),$ is the continuous dual Hahn polynomial.

Henceforth, we obtain:
\begin{equation}
I_{q}(\lambda)=\frac{\Gamma(n)\Gamma(\frac{i\lambda+3\nu-4l-n}{2})\Gamma(\frac{-i\lambda+3\nu-4l-n}{2})}{\Gamma(2(\nu-l)+q)\Gamma(\nu-2l+q)}S_{q}(\frac{\lambda^{2}}{4},\frac{3\nu-4l-n}{2},\frac{\nu+n}{2},\frac{n-\nu}{2})
\end{equation}

finally, we get,
\begin{equation}\label{eq:gfinal}
g(\lambda)=\frac{\pi^{n}\Gamma^{2}(n)}{2\Gamma^{2}(\nu-l)}|\Gamma(\frac{i\lambda+3\nu-4l-n}{2})|^{2}\sum_{q=0}^{2l}(-1)^{q}A_{q}\frac{S_{q}(\frac{\lambda^{2}}{4},\frac{3\nu-4l-n}{2},\frac{\nu+n}{2},\frac{n-\nu}{2})}{\Gamma(2(\nu-l)+q)\Gamma(\nu-2l+q)}.
\end{equation}

Using the the fact $f(s)=g(s^{\frac{1}{2}})$, with $s=\lambda^{2},$ Hence, if we replace the spectral parameter $s$ by $-(\Delta_{\nu}+(n-\nu)^{2})$, we obtain the desired result. This ends the proof.

\section{The Berezin heat kernel.}
As a direct consequence of the above theorem, we can derive easily the Heat kernel of the Berezin operator $B^{\nu}_{l}.$ Precisely we have the following.
\begin{proposition}
Let $u(t,z),$ be the solution of the heat Cauchy problem associated to the $B_{l}^{\nu},$ on $\mathbb{B}^{n},$

\begin{equation}
(\partial_{t}+B_{l}^{\nu})u(t,z)=0, (t,z)\in \mathbb{R}\times\mathbb{B}^{n}
\end{equation}

\begin{equation}
u(0,z)=\varphi(z), \varphi\in C_{0}^{\infty}(\mathbb{B}^{n}).
\end{equation}
Then, $u(t,z),$ is given by the integral formula,
\begin{equation}
u(t,z)=\int_{\mathbb{B}^{n}}H^{\nu}_{l}(t,z,w)\varphi(w)d\mu_{\nu}(w),
\end{equation}
where $H^{\nu}_{l}(t,z,w),$ is the heat kernel given by:
$$
H^{\nu}_{l}(t,z,w)=(1-<z,w>)^{-\nu}\{\frac{\Gamma(n)}{2\pi^{n+1}2^{2(\nu-n)}}
\int_{0}^{+\infty}|C_{\nu}(\lambda)|^{-2}\phi^{n-1,-\nu}_{\lambda}(d(z,w))\exp(tg(\lambda)d\lambda\}
$$

\begin{equation}
+(1-<z,w>)^{-\nu}\{\sum_{j=0}^{\frac{\nu-n}{2}}c_{j}\phi_{\lambda_{j}}^{(n-1,-\nu)}(d(z,w))\exp(tg(\lambda_{j}))\}.
\end{equation}
where
\begin{equation}
g(\lambda)=\frac{\pi^{n}\Gamma^{2}(n)}{2\Gamma^{2}(\nu-l)}|\Gamma(\frac{i\lambda+3\nu-4l-n}{2})|^{2}\sum_{q=0}^{2l}(-1)^{q}A_{q}\frac{S_{q}(\frac{\lambda^{2}}{4},\frac{3\nu-4l-n}{2},\frac{\nu+n}{2},\frac{n-\nu}{2})}{\Gamma(2(\nu-l)+q)\Gamma(\nu-2l+q)},
\end{equation}

Where $S_{j}$ is the continuous  dual Hahn polynomial defined by (\ref{eq:Hahnpolynomial}), $c_{j}$ the constant defined by (\ref{eq:constantreproduce}) and $\lambda_{j}=i(2j+n-\nu).$
\end{proposition}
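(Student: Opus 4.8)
The plan is to treat this proposition exactly as the heat kernel of $\Delta_{\nu}$ was treated in Section 3, replacing the symbol $e^{-ts}$ of the ordinary heat semigroup by the symbol produced by the Berezin transform. First I would observe that the unique solution of the Cauchy problem $(\partial_{t}+B^{\nu}_{l})u=0$, $u(0,\cdot)=\varphi$, is given by the action of the semigroup generated by $-B^{\nu}_{l}$, namely $u(t,\cdot)=e^{-tB^{\nu}_{l}}[\varphi]$. This is legitimate because $B^{\nu}_{l}$ is a bounded operator on $L^{2}_{\nu}(\mathbb{B}^{n})$ (the boundedness proposition of Section 4), so the exponential is defined by its norm-convergent series and the Cauchy problem is well posed. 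Hence it suffices to identify the Schwartz kernel $H^{\nu}_{l}(t,z,w)$ of $e^{-tB^{\nu}_{l}}$.

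Next I would invoke the main theorem of Section 5, which asserts $B^{\nu}_{l}=f(\widetilde{\Delta}_{\nu})$ with $f(s)=g(s^{1/2})$ and $g$ the explicit function (\ref{eq:gfinal}). Since $e^{-tB^{\nu}_{l}}$ is then again a function of $\widetilde{\Delta}_{\nu}$, namely $F(\widetilde{\Delta}_{\nu})$ with Borel symbol $F(s)=\exp(-tf(s))$, the functional-calculus kernel formula (\ref{eq:kerneloffunctionofoperator}) applies and gives
\[
H^{\nu}_{l}(t,z,w)=\int_{\sigma(\widetilde{\Delta}_{\nu})}e(s,z,w)\,\exp(-tf(s))\,ds,
\]
where $e(s,z,w)$ is the spectral function furnished by Proposition (\ref{eq:spectralfunction}). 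The symbol $F$ is admissible because $B^{\nu}_{l}$ is bounded, so $g$ is bounded on the spectrum and $\exp(-tf)$ is a bounded Borel function; the sign in the exponent is the one dictated by the semigroup $e^{-tB^{\nu}_{l}}$.

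It then remains to insert the explicit two-part spectral function and to carry out, in the continuous part, the substitution $s=\lambda^{2}$, so that $(\sqrt{s})^{-1}ds=2\,d\lambda$, which absorbs the factor $(\sqrt{s})^{-1}$ and produces the numerical factor turning the prefactor $\frac{\Gamma(n)}{4\pi^{n+1}2^{2(\nu-n)}}$ into $\frac{\Gamma(n)}{2\pi^{n+1}2^{2(\nu-n)}}$, together with $f(\lambda^{2})=g(\lambda)$; in the discrete part I set $s_{j}=\lambda_{j}^{2}$ and use $\sqrt{s_{j}}=\lambda_{j}$, $f(s_{j})=g(\lambda_{j})$. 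The $\delta(s-s_{j})$ masses collapse the integral to the finite sum weighted by the constants $c_{j}$ of (\ref{eq:constantreproduce}), while the continuous Plancherel density $|C_{\nu}(\lambda)|^{-2}$ survives against $\phi^{(n-1,-\nu)}_{\lambda}(d(z,w))$. Reading off the two summands reproduces the claimed expression for $H^{\nu}_{l}(t,z,w)$, the exponential factors being $\exp(-tg(\lambda))$ and $\exp(-tg(\lambda_{j}))$, with $\lambda_{j}$ as in (\ref{eq:lambdadeboussejra}).

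The computation is routine once these three ingredients are in place; the only point requiring care, and the natural main obstacle, is the justification of the distributional functional calculus for the nonelementary symbol $\exp(-tf)$: one must verify that $g$ (equivalently $f$) is bounded and measurable on $\sigma(\widetilde{\Delta}_{\nu})$ so that (\ref{eq:functionofoperator})--(\ref{eq:kerneloffunctionofoperator}) apply and the resulting spectral integral converges in $\mathcal{D}'$. This is guaranteed by the boundedness of $B^{\nu}_{l}$ established in Section 4, so that no analytic input beyond the main theorem and Proposition (\ref{eq:spectralfunction}) is needed.
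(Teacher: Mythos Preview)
Your argument is essentially identical to the paper's own proof: write $u(t,\cdot)=e^{-tB^{\nu}_{l}}\varphi$, use the main theorem $B^{\nu}_{l}=f(\widetilde{\Delta}_{\nu})$ with $f(s)=g(\sqrt{s})$, and apply the functional-calculus kernel formulas (\ref{eq:functionofoperator})--(\ref{eq:kerneloffunctionofoperator}) together with the explicit spectral function of Proposition~\ref{eq:spectralfunction}; the substitution $s=\lambda^{2}$ then yields the stated two-part expression. You are also right that the semigroup produces $\exp(-tg(\lambda))$ rather than $\exp(tg(\lambda))$: the paper's own proof writes $h_{t}(s)=\exp(-tf(s))$, so the positive sign in the displayed formula for $H^{\nu}_{l}$ is a typographical slip.
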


\textbf{Proof.}

It is not difficult to see that the solution $u(t,z),$ is given by the action of the semigroup $e^{-tB^{\nu}_{l}}$ on the initial data $\varphi(z),$,
\begin{equation}
u(t,z)=e^{-tB^{\nu}_{l}}[\varphi](z)=h_{t}(\Delta_{\nu}),h_{t}(s)=\exp(-tf(s)),
\end{equation}
where $f(s)$ is the function given by $f(s)=g(s^{\frac{1}{2}})$ and $g$ the function given in (\ref{eq:gfinal}). Then, by using (\ref{eq:functionofoperator}) and (\ref{eq:kerneloffunctionofoperator}), a direct calculus gives the above expression of the heat kernel.

%\bibliographystyle{abbrv}
%\bibliography{AhTo}
%$\Br ~ \Bb~ \Bc$
%\begin{thebibliography}{1}

%\bibitem{boin:98a}

%\end{thebibliography}

\end{document}